\newtheorem{lemma}{Lemma}[section]
\newtheorem{theorem}[lemma]{Theorem}
\newtheorem{corollary}[lemma]{Corollary}
\newtheorem{conjecture}[lemma]{Conjecture}
\newtheorem{prop}[lemma]{Proposition}
\theoremstyle{definition}
\newtheorem{definition}[lemma]{Definition}
\newtheorem{remark}[lemma]{Remark}
\theoremstyle{remark}
\newtheorem*{proof*}{Proof}
\numberwithin{equation}{section}
\def\Spec{{\bf {Spec}}}
\def\RHom{{\mathbf{R}\rm{Hom}}}
\def\D{\mathrm{D}}
\def\Ext{{\mathrm{Ext}}}
\def\sExt{{\mathscr{E}xt}}
\def\Hom{{\mathrm{Hom}}}
\def\sHom{{\mathscr{H}om}}
\def\End{{\mathrm{End}}}
\def\Tor{{\mathrm{Tor}}}
\def\Spec{{\mathrm{Spec\ }}}
\def\deg{{\mathrm{deg}}}
\def\perf{{\mathfrak{Perf}}}
\def\ZZ{{\mathbb Z}}
\def\CC{{\mathbb C}}
\def\AA{{\mathbb A}}
\def\HH{{\mathrm{HH}}}
\def\bR{{\bf{R}}}
\def\bL{{\bf{L}}}
\def\cG{{\cal{G}}}
\def\cF{{\cal{F}}}
\def\cO{{\cal{O}}}
\def\cC{{\cal{C}}}
\def\cL{{\cal{L}}}
\def\cM{{\cal{M}}}
\def\cN{{\cal{N}}}
\def\cH{{\cal{H}}}
\def\cP{{\cal{P}}}
\def\cT{{\cal{T}}}
\def\cU{{\cal{U}}}
\def\cR{{\cal{R}}}
\def\cD{{\cal{D}}}
\def\cX{{\cal{X}}}
\def\cY{{\cal{Y}}}
\def\tr{{\mathrm{tr}}}
\def\Res{{\mathrm{Res}}}
\def\deg{{\mathrm{deg}}}
\def\ep{{\epsilon}}
\title{Contraction algebra and invariants of singularities}
\date{}
\author[1]{Zheng Hua\thanks{huazheng@maths.hku.hk}}
\author[2]{Yukinobu Toda \thanks{yukinobu.toda@ipmu.jp}}
\affil[1]{Department of Mathematics, the University of Hong Kong}
\affil[2]{Kavli Institute for the Physics and Mathematics of the Universe, University of Tokyo}
\begin{document}
\maketitle
\begin{abstract}
In \cite{DW13}, Donovan and Wemyss introduced the contraction algebra of flopping curves in 3-folds. When the flopping curve is smooth and irreducible, we prove that the contraction algebra together with its $A_\infty$-structure recovers various invariants associated to the underlying singularity and its small resolution, including the derived category of singularities and the genus zero Gopakumar-Vafa invariants.
\end{abstract}

\section{Introduction}\label{sec:intro}
\subsection{Motivation}
In their recent works~\cite{DW13}, \cite{DW15}, 
Donovan and Wemyss constructed certain algebras called 
\emph{contraction algebras} associated with birational morphisms $f:X\to Y$ with at most one-dimensional fibers. The contraction algebras prorepresent the functors of noncommutative deformations of the reduced fiber of $f$. 
When $f$ is a 3-fold flopping contraction, 
they described the Bridgeland-Chen's 
flop-flop autoequivalence
of the derived category of coherent sheaves on $X$
in terms of the twisted functor associated with the 
contraction algebra. 
This is a remarkable application of non-commutative deformation 
theory first developed by Laudal to the commutative algebraic geometry, 
and provides a deep understanding of 
floppable $(1, -3)$-curves, 
mysterious curves in birational geometry.  
Indeed they  
conjectured that 
their contraction algebra
recovers the formal neighborhood at the 
flopping curve, so that the contraction 
algebra contains enough information for the local 
geometry of the flopping curve. 
The above Donovan-Wemyss's conjecture is one of the 
motivations in this article.  

The Donovan-Wemyss's contraction algebra is also interesting in 
enumerative geometry. 
As an analogy of Reid's width for a $(0, -2)$-curve, 
they introduced the new invariant for a $(1 -3)$-curve
called the non-commutative width 
as the dimension of the contraction algebra. 
In~\cite{To14},
the second author showed that 
 their non-commutative widths are
described by
weighted sums of 
 Katz's genus zero Gopakumar-Vafa (GV) invariants, 
which virtually count rational curves on Calabi-Yau 3-folds. 
The formula proved in~\cite{To14} (cf.~Theorem~\ref{cor:dim}) 
is 
quite simple, but 
the proof of~\cite{To14} was not 
enough intrinsic to explain its meaning. 
Also it was not shown whether the contraction algebra 
recovers the GV invariants 
of all degrees or not. 
This article is also 
motivated by the above questions 
on the relationship between contraction algebras and 
GV invariants. 

\subsection{Results and the organization of the paper}
After reviewing some 
basic terminology in Section~\ref{sec:pre}, 
we study 
an alternative description 
of contraction algebras in Section~\ref{sec:struc}. 
We will show that the 
contraction algebras
are described in terms of 
(derived) endomorphism algebras 
in Orlov's derived category of singularities
$\D_{sg}(Y)$ of $Y$
(Corollary~\ref{Ext1v}). 
This result will be used 
to provide 
some additional structures on contraction 
algebras. 
We see that our alternative description of the contraction algebra
turns it into a $\ZZ/2$-graded $A_\infty$-algebra with a Frobenius structure, which we call the $\infty$-contraction algebra. It reduces to 
Donovan-Wemyss's contraction algebra by forgetting the higher $A_\infty$-structures. We prove that the $\infty$-contraction algebra recover the derived category of singularities
 $\D_{sg}(Y)$ and the triangulated subcategory $\cC^b\subset\D^b(coh X)$ of objects $F$ with $\bR f_{\ast} F=0$ (Theorem \ref{ReconCb}). 

In Section~\ref{sec:def}, 
we use the above alternative description of the 
contraction algebras to give 
their  
deformations 
to semisimple algebras with the 
numbers of blocks coincide with the sums of the 
GV invariants (Theorem~\ref{ss}).  
We are still not able to prove that our 
deformations of the contraction algebras are flat, 
but assuming this would give a different and more 
intrinsic proof of the dimension formula of the contraction 
algebra given in~\cite{To14}. 
We will also address the question regarding the 
reconstruction problem of 
GV invariants from the contraction algebras. 
Using the work of the second author on wall crossing formula of parabolic stable pairs \cite{Todpara}, we show that the
generating series of non-commutative Hilbert schemes associated with 
contraction algebras admit a product expansion whose power 
is given by GV invariants (Theorem~\ref{thm:GVformula}). 
This result also 
leads to an alternative proof of the dimension formula in \cite{To14}. 

In Section \ref{sec:inf}, we 
study Donovan-Wemyss's conjecture (Conjecture~\ref{GenDWconj}),
which says that the contraction algebra 
determines the formal neighborhood 
of the flopping curve. 
We will 
propose an $A_\infty$-enhancement 
(Conjecture \ref{GenDWconj}) of the 
above conjecture, 
using the $A_{\infty}$-structure 
of the contraction algebra. 
We prove it for the special case when the singularity 
of $Y$ is 
weighted homogeneous (Theorem \ref{WH}). 
Finally in Section \ref{sec:exam}, we study several examples of contraction algebras, including flopping contractions of type $cA_1$, $cD_4$.

\paragraph{Acknowledgments.} We are grateful to Will Donovan and Michael Wemyss for many valuable comments. The research of the first author is supported by RGC Early Career grant no. 27300214 and NSFC Science Fund for Young Scholars no. 11401501. The first author would like to thank to IPMU for its hospitality whilst this manuscript was being prepared.
The second author is supported by World Premier 
International Research Center Initiative
(WPI initiative), MEXT, Japan, 
and Grant-in Aid
for Scientific Research grant (No.~26287002)
from the Ministry of Education, Culture,
Sports, Science and Technology, Japan, 
and
JSPS Program for Advancing Strategic International Networks to Accelerate the Circulation of Talented Researchers.

\section{Preliminary}\label{sec:pre}
\subsection{Contraction algebra}
Let $X$ be a smooth quasi-projective complex 3-fold.  
A flopping contraction is a birational morphism 
\begin{equation}\label{flop}
f: X\to Y
\end{equation}
which is an isomorphism in codimension one, $Y$ has only Gorenstein singularities and the relative Picard number of $f$ equals to one. 
We denote by $C$ the exceptional locus of $f$, 
which is a tree of smooth rational curves. 
Below, we assume that $Y$ is an affine variety and 
the exceptional locus $C$ is isomorphic to $\mathbb{P}^1$. 
Let $\cL$ be a relative ample line bundle on $X$. Take a minimal generating set of $\Ext^1(\cO_X,\cL^{-1})$ 
as $\Gamma(\cO_X)$-module
consisting of $r$ elements. 
We define the vector bundle $\cN$ to be the extension:
\begin{equation}\label{extN}
\xymatrix{
0\ar[r] & \cL^{-1}\ar[r] & \cN\ar[r] &\cO_X^{\oplus r}\ar[r] &0
}
\end{equation}
We set $\cU:= \cO_X\oplus \cN$, $N:=\bR f_*\cN=f_*\cN$ and 
\[
A:= \End_{X}(\cU)\cong\End_{Y}(\cO_Y\oplus N).
\]
By Van den Bergh (\cite[Section~3.2.8]{VdB04}), $\cU$ is a tilting object. The functor $F:=\RHom_X(\cU,-)$ defines an equivalence of categories $\D^b(coh(X))\cong \D^b(\text{mod-}A)$. Obviously, $F(\cO_{X})$ and $F(\cN)$ are projective $A$-modules.

Let $p \in Y$ be the image of $C$ under $f$, 
and we set $R=\hat{\cO}_{Y,p}$ the formal completion of $\cO_Y$ at the singular point $p$. 
We take the completion of \ref{flop}
\begin{equation}\label{compflop}
\hat{f}:\hat{X}:=X\times_Y \Spec R\to \hat{Y} :=\Spec R.
\end{equation}
Let $\cL$ be a relative ample line bundle on $\hat{X}$ such that $\deg(\cL|_C)=1$. In this case, 
\[
A:= \End_{\hat{X}}(\cU)\cong \End_R(R\oplus N).
\]

The following proposition is due to Van den Bergh~\cite{VdB04}. 
\begin{prop}\emph{(\cite[Proposition~3.5.7]{VdB04})}\label{prop:simple}
Let $f: \hat{X}\to \hat{Y}$ be the flopping contraction as before.
Denote $C_l$ for the scheme theoretical fiber of $p\in Y$. Then $F(\cO_{C_l})$ and $F(\cO_C(-1)[1])$ are simple $A$-modules. 
\end{prop}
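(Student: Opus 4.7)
The plan is to deduce the proposition from Van den Bergh's theory of perverse coherent sheaves. Since $\cU$ is a tilting bundle, $F = \RHom_{\hat{X}}(\cU,-)$ is a triangulated equivalence $\D^b(coh\,\hat{X}) \simeq \D^b(\text{mod-}A)$, and under $F^{-1}$ the standard heart of $\text{mod-}A$ corresponds to a perverse heart ${}^{n}\mathrm{Per}(\hat{X}/\hat{Y}) \subset \D^b(coh\,\hat{X})$ (with $n = 0$ or $-1$ depending on the convention used to build $\cN$). The simple $A$-modules correspond bijectively with simples of this perverse heart. There are exactly two such simples supported at the contracted point $p$, and the proposition amounts to identifying them as $\cO_{C_l}$ and $\cO_C(-1)[1]$.

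First, I would verify that both objects lie in the perverse heart by checking the defining conditions, which amount to vanishing of higher $\bR\hat{f}_{\ast}$ together with $\Hom$-orthogonality against coherent sheaves with vanishing pushforward. From rationality of $\hat{Y}$ one has $\bR\hat{f}_{\ast}\cO_{\hat{X}} = \cO_{\hat{Y}}$, and derived base change along $\{p\} \hookrightarrow \hat{Y}$ gives $\bR\hat{f}_{\ast}\cO_{C_l} = \cO_p$, concentrated in degree zero. For $\cO_C(-1)[1]$, the vanishing $H^{\ast}(\PP^1,\cO(-1)) = 0$ implies $\bR\hat{f}_{\ast}\cO_C(-1)[1] = 0$, and the remaining orthogonality follows from standard properties of sheaves of the form $\cO_C(i)$.

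Next, I would verify simplicity by computing $F$ on each object. Applying $\RHom(-,\cO_{C_l})$ and $\RHom(-,\cO_C(-1)[1])$ to the defining extension $0 \to \cL^{-1} \to \cN \to \cO_{\hat{X}}^{\oplus r} \to 0$ and unwinding the resulting long exact sequences of $\Ext$, every term reduces to cohomology of line bundles on $\PP^1$. The relevant inputs are $H^{\ast}(\PP^1,\cO) = k$, $H^{\ast}(\PP^1,\cO(-1)) = 0$, and $H^{\ast}(\PP^1,\cO(-2)) = k[-1]$. After careful bookkeeping of the cohomological degrees (accounting for the shift in the definition of the perverse heart), the conclusion is that $F(\cO_{C_l})$ is one-dimensional and supported only at the idempotent $e_0$ corresponding to $\cO_{\hat{X}}$, while $F(\cO_C(-1)[1])$ is one-dimensional and supported only at $e_1$. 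Any $A$-module of this shape is necessarily simple.

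The main obstacle is the bookkeeping in the long exact sequences when $C_l$ is non-reduced (flops of length greater than one), where the connecting map from $\Hom(\cL^{-1},\cO_{C_l})$ to $\Ext^1(\cO_{\hat{X}}^{\oplus r},\cO_{C_l})$ must be shown to be injective. This uses the minimality of the generating set of $\Ext^1(\cO_{\hat{X}},\cL^{-1})$ chosen in the construction of $\cN$, which is precisely the property ensuring that $\cO_{\hat{X}} \oplus \cN$ is tilting. The detailed verification is the content of Van den Bergh's Proposition 3.5.7.
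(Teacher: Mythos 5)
First, note that the paper contains no proof of this proposition at all: it is quoted from \cite[Proposition~3.5.7]{VdB04}, so your closing move of deferring the hard verification to Van den Bergh is in effect what the authors themselves do, and your overall route (tilting equivalence, perverse heart, two simples supported on the fibre, identified by computing $F$) is indeed the shape of Van den Bergh's argument.

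However, as a standalone sketch your computation has a genuine gap: the outcome you assert is incompatible with the bundle $\cN$ defined by the sequence (\ref{extN}). Applying $\Hom(-,\cO_{C_l})$ to $0\to\cL^{-1}\to\cN\to\cO_X^{\oplus r}\to 0$, the connecting map you single out as the crux, $\Hom(\cL^{-1},\cO_{C_l})\to\Ext^1(\cO_X^{\oplus r},\cO_{C_l})$, has target $H^1(\cO_{C_l})^{\oplus r}=0$ (this vanishing follows from $R^1f_*\cO_X=0$ and the fibre dimension), so it cannot do the work you assign to it; moreover $\Hom(\cN,\cO_{C_l})$ contains $\Hom(\cO_X^{\oplus r},\cO_{C_l})\cong\CC^{r}$, and in the length-one case ($r=0$, $\cN=\cL^{-1}$) it equals $H^0(\cO_C(1))=\CC^2$. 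Hence with this convention $F(\cO_{C_l})$ has nonzero components at both idempotents and is not a vertex simple, while $F(\cO_C(-1)[1])$ is concentrated in cohomological degree $-1$ (since $\RHom(\cN,\cO_C(-1))\cong H^{\bullet}(\cO_C\oplus\cO_C(-1)^{\oplus r})=\CC$ in degree $0$), so it is not a module either. The statement is the one Van den Bergh proves for his tilting bundle $\cO_X\oplus\cM$ with $\cM$ defined by $0\to\cO_X^{\oplus r}\to\cM\to\cL\to 0$ (a minimal generating set of $H^1(X,\cL^{-1})$); the sequence (\ref{extN}) is the dual convention, and the corresponding heart has different simples. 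With $\cM$ in place of $\cN$ your bookkeeping does close up: $\RHom(\cO_X,\cO_{C_l})=\CC$, $\RHom(\cO_X,\cO_C(-1)[1])=0$, $\RHom(\cM,\cO_C(-1)[1])=\CC$ in degree $0$ (using $\cM^{\vee}|_C\cong\cO_C(-1)\oplus\cO_C^{\oplus r}$), and the genuinely nontrivial step is now the connecting map $H^0(\cO_{C_l})^{\oplus r}\to H^1(\cL^{-1}|_{C_l})$ into a generally nonzero group, which is where minimality of the chosen generators (and the non-reduced structure of $C_l$) really enters --- this is the correct analogue of the obstacle you identified, but it only makes sense after fixing the convention. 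A smaller point: $\bR f_*\cO_{C_l}=\cO_p$ does not follow from derived base change ($f$ is not flat and $\cO_{C_l}$ is not the derived fibre over the singular point $p$); one deduces $R^1f_*\cO_{C_l}=0$ from $R^1f_*\cO_X=0$ and the fibre dimension, and $f_*\cO_{C_l}=\cO_p$ needs a separate argument.
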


\begin{definition}(\cite[Definition~2.8]{DW13})  \label{A_con}
The contraction algebra $A_{con}$ is defined to be $A/I_{con}$, where $I_{con}$ is the two sided ideal of $A$ consisting of morphisms $R\oplus N\to R\oplus N$ factoring through a summand of finite sums of $R$.
\end{definition}
The contraction algebra $A_{con}$ is an augmented algebra. The augmentation morphism is defined by the algebra morphism $A\to \End_{A}(S)\cong \CC$, which factors through $A_{con}$.
Here $S=F(\cO_C(-1)[1])$ is the simple $A$-module in 
Proposition~\ref{prop:simple}. 
\begin{remark}
In \cite{DW13}, Donovan and Wemyss gave an alternative definition of the contraction algebra using noncommutative deformation functor (see Definition 2.11 \cite{DW13}). It is Morita equivalent with the contraction algebra in Definition \ref{A_con} but may not be isomorphic. In this paper, we will use only Definition \ref{A_con} (and its variation in Theorem \ref{Acon=End}). The Morita equivalence of the contraction algebras in these two definitions can be explained by Theorem \ref{ss}.
\end{remark}

\subsection{Categories associated to the singularity and its resolution}

Let $Y$ be a quasi-projective scheme. Denote $\D^b(coh(Y))$ for the bounded derived category of coherent sheaves on $Y$ and $\perf(Y)$ for the full subcategory consisting of perfect complexes on $Y$. We define a triangulated category $\D_{sg}(Y)$ as the quotient of $\D^b(coh(Y))$ by $\perf(Y)$ 
(cf.~\cite[Definition~1.8]{Orlov02}). Consider the case when $Y$ is a hypersurface in a smooth affine variety $\Spec B$ defined by $W=0$ for a function $W\in B$.
Orlov proved that the derived category of singularities $\D_{sg}(Y)$ is equivalent, as triangulated categories, with the homotopy category of the category of matrix factorizations $\text{MF}(W)$ (cf.~\cite[Theorem~3.9]{Orlov02}). The objects of $\text{MF}(W)$ are the ordered pairs:
\[\bar{E}=(E,\delta_E):=\xymatrix{
E_1\ar@/^1mm/[r]^{\delta_1} &E_0\ar@/^1mm/[l]^{\delta_0} 
}
\] where $E_0$ and $E_1$ are finitely generated projective $B$-modules  and the compositions $\delta_0\delta_1$ and $\delta_1\delta_0$ are the multiplications by the element $W\in B$. 
The precise definition of the category of matrix factorizations $\text{MF}(W)$ can be found in~\cite[Section~3]{Orlov02}.

Using the category of singularities, one can give an alternative definition for the contraction algebra $A_{con}$.  
First we recall a lemma of Orlov about  category of singularities for Gorenstein schemes.
\begin{prop}\emph{(\cite[Proposition~1.21]{Orlov02})}\label{DsgGor}
Let $Y$ be a Gorenstein scheme of finite Krull dimension. Let $\cF$ and $\cG$ be coherent sheaves on $Y$
such that $\sExt^i(\cF,\cO_Y)=0$ for all $i>0$. Fix 
$N$ such that $\Ext^i(\cP,\cG)=0$ for $i>N$ and for any locally free sheaf $\cP$. Then 
\[
\Hom_{\D_{sg}}(\cF,\cG[N])\cong \Ext^N(\cF,\cG)/ \cR
\] where $\cR$ is the subspace of elements factoring through locally free.
\end{prop}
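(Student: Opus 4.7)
The argument has two main ingredients. First, a syzygy reduction on the source $\cF$ converts the shift $[N]$ on the target into an $N$-th syzygy on the source. I pick a locally free resolution $\cdots\to \cP_1\to \cP_0\to \cF\to 0$ and set $\cF_N := \ker(\cP_{N-1}\to \cP_{N-2})$. The truncated complex $K:=[\cP_{N-1}\to\cdots\to \cP_0]$, placed in degrees $-(N-1),\dots,0$, is perfect, and its two nonzero cohomologies fit into a distinguished triangle
\[
\cF_N[N-1]\to K\to \cF \xrightarrow{\phi} \cF_N[N]
\]
in $\D^b(coh Y)$. Since $K\in\perf(Y)$ is zero in $\D_{sg}(Y)$, the boundary map $\phi$ becomes an isomorphism there, yielding $\Hom_{\D_{sg}}(\cF,\cG[N])\cong \Hom_{\D_{sg}}(\cF_N,\cG)$. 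Dimension shifting of $\sExt^i(-,\cO_Y)$ through the syzygy sequences also gives $\sExt^i(\cF_N,\cO_Y)=0$ for $i>0$, so under the Gorenstein hypothesis $\cF_N$ is maximal Cohen--Macaulay.

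Second, I would show that the natural map $\Hom(\cF_N,\cG)\to \Hom_{\D_{sg}}(\cF_N,\cG)$ is surjective with kernel precisely the morphisms factoring through a locally free sheaf. For surjectivity, given a right roof $\cF_N\to T\xleftarrow{s}\cG$ with $\mathrm{cone}(s)$ perfect, one replaces the perfect cone by a bounded complex of locally free sheaves and inductively truncates $T$ down to a sheaf; both the Gorenstein-type vanishing $\sExt^{>0}(\cF_N,\cL)=0$ for locally free $\cL$ and the hypothesis $\Ext^{>N}(\cP,\cG)=0$ are used to kill the $\Ext$-obstructions at each step. For the kernel, a sheaf morphism $\cF_N\to\cG$ that factors through any perfect complex in $\D^b$ already factors through a single locally free sheaf, by resolving the intermediate perfect complex and again using $\sExt^{>0}(\cF_N,\cO_Y)=0$ to collapse the factorization.

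Finally, to match $\Hom(\cF_N,\cG)/\cR$ with $\Ext^N(\cF,\cG)/\cR$, I apply $\Hom(-,\cG)$ to the chain of syzygy short exact sequences $0\to \cF_{k+1}\to \cP_k\to \cF_k\to 0$ and iterate. This yields a natural surjection $\Hom(\cF_N,\cG)\twoheadrightarrow\Ext^N(\cF,\cG)$ whose kernel is the image of $\Hom(\cP_{N-1},\cG)$, the hypothesis $\Ext^{>N}(\cP,\cG)=0$ being exactly what rules out higher obstructions in the dimension shifting. Passing to quotients by $\cR$ on both sides then gives the asserted isomorphism. The delicate step is the middle one: turning an arbitrary roof in the Verdier quotient into a sheaf morphism, where both $\Ext$-vanishing hypotheses combine and the bookkeeping of roof equivalences through the truncation is the main technical hurdle.
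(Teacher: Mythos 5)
The paper itself gives no proof of this proposition (it is quoted from Orlov), so the comparison is with Orlov's argument. Your step 1, the syzygy reduction $\cF\cong\cF_N[N]$ in $\D_{sg}(Y)$ together with $\sExt^{>0}(\cF_N,\cO_Y)=0$, is correct. The genuine gap is step 3, and it infects step 2. The iterated connecting map $\Hom(\cF_N,\cG)\to\Ext^N(\cF,\cG)$ is surjective with the kernel you describe only if the groups $\Ext^{j}(\cP_k,\cG)$ vanish in the \emph{intermediate} degrees $1\le j\le N$; the hypothesis only kills degrees $>N$, so it does not ``rule out higher obstructions'' here -- you are using it in the wrong place. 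Concretely, let $Y$ be projective Gorenstein of dimension $d$, $\cF=\cO_Y$, $\cG=\omega_Y$, $N=d$: all hypotheses hold, and for the evident resolution $\cF_N=0$, while $\Ext^N(\cF,\cG)=H^d(Y,\omega_Y)\neq 0$, so the claimed surjection cannot exist. (The proposition is not contradicted, since there every class factors through $\cO_Y$ itself, so both sides are zero after dividing by $\cR$ -- but that is exactly the point: the statement you need is the mod-$\cR$ one, which is essentially the content of the proposition and is not delivered by dimension shifting.)

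The deeper issue is that reducing to shift zero discards the mechanism that makes the hypotheses bite. Orlov keeps the shift and studies the natural map $\Ext^N(\cF,\cG)\to\Hom_{\D_{sg}}(\cF,\cG[N])$ directly: when one straightens a roof whose denominator has perfect cone (a bounded complex of locally free sheaves), the obstruction spaces are Ext groups of locally free sheaves into $\cG$ in degrees $>N$, which vanish by the choice of $N$, while $\sExt^{>0}(\cF,\cO_Y)=0$ disposes of the remaining pieces; the same two vanishings collapse a factorization of $\cF\to\cG[N]$ through a perfect complex into one through a single locally free sheaf, identifying the kernel with $\cR$. For your step 2 (shift zero, source $\cF_N$) the analogous obstructions live in $\Ext^{j}(\cF_N,\cP)\cong H^j(Y,\sHom(\cF_N,\cP))$ and in $\Ext^{j}(\cP,\cG)$ with $1\le j\le N$, none of which are controlled by the hypotheses; on a non-affine $Y$ there is no reason for $\Hom(\cF_N,\cG)\to\Hom_{\D_{sg}}(\cF_N,\cG)$ to be surjective, which is precisely why the proposition is stated with the large shift (in the affine case, where the paper applies it, one may take $N=0$ and your reduction is vacuous). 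To repair the proof: keep the shift, prove surjectivity of $\Ext^N(\cF,\cG)\to\Hom_{\D_{sg}}(\cF,\cG[N])$ by roof-straightening as above, and run your ``collapse'' argument for the kernel on morphisms $\cF\to\cG[N]$ rather than on $\cF_N\to\cG$.
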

The condition $\sExt^i(\cF,\cO_Y)=0$ is satisfied when $\cF$ is maximal Cohen-Macaulay. If we assume that $Y$ is affine then the integer $N$ can be chosen to be 0.

\begin{theorem}\label{Acon=End}
Let $f:X\to Y$ be a flopping contraction defined as above such that $Y=\Spec R$ is affine.
There is a natural isomorphism 
\[
A_{con}\cong \Hom^0_{\D_{sg}(Y)}(N,N).
\]
\end{theorem}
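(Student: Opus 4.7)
The plan is to compare both sides directly via the block matrix structure of $A$ and then invoke Proposition~\ref{DsgGor} to identify the $(N,N)$-block with the singularity Hom.

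First, I would write $A = \End_R(R \oplus N)$ in its natural $2\times 2$ block form
\[
A \;=\; \begin{pmatrix} \Hom_R(R,R) & \Hom_R(N,R) \\ \Hom_R(R,N) & \End_R(N) \end{pmatrix} \;=\; \begin{pmatrix} R & \Hom_R(N,R) \\ N & \End_R(N) \end{pmatrix}
\]
and analyze the ideal $I_{con}$ block by block. The key observation is that any morphism whose source or target is the summand $R$ automatically lies in $I_{con}$: for example a map $\beta : R \to N$ factors as $R \xrightarrow{\mathrm{id}} R \xrightarrow{\beta} N$, and similarly for maps $R \to R$ and $N \to R$. Hence the three blocks involving $R$ contribute entirely to $I_{con}$, and the only nontrivial piece is the $(N,N)$-component, which is exactly the subspace
\[
\cR \;:=\; \{\phi \in \End_R(N) : \phi \text{ factors through } R^n \text{ for some } n\}.
\]
Since $R$ is local Noetherian, summands of finite sums of $R$ are themselves free, so this matches the definition in Definition~\ref{A_con}. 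Thus one obtains $A/I_{con} \cong \End_R(N)/\cR$ canonically.

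Next I would apply Proposition~\ref{DsgGor} with $\cF = \cG = N$ and $N = 0$ (permissible because $Y$ is affine, so higher $\Ext$'s into coherent sheaves vanish against locally free sources). The needed hypothesis $\sExt^i(N,\cO_Y)=0$ for $i>0$ holds because $N$ is maximal Cohen–Macaulay over the Gorenstein ring $R$: indeed, $N = f_\ast \cN$ with $\cN$ a summand of Van den Bergh's tilting bundle, which on a small resolution is known to yield MCM modules. Then Proposition~\ref{DsgGor} identifies $\Hom^0_{\D_{sg}(Y)}(N,N)$ with $\End_R(N)/\cR'$, where $\cR'$ is the subspace of endomorphisms factoring through a locally free sheaf. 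Over the affine local ring $R$, finitely generated projective equals free, so $\cR' = \cR$.

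Combining the two identifications gives the desired isomorphism $A_{con} \cong \Hom^0_{\D_{sg}(Y)}(N,N)$. The step most likely to require care is verifying that $N$ is MCM so that Proposition~\ref{DsgGor} applies with the optimal bound $N=0$; aside from this, the argument is essentially a bookkeeping of which morphisms trivially factor through the free summand $R$.
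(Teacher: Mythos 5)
Your argument is correct and essentially the paper's: both reduce to Proposition~\ref{DsgGor} (with the integer $N=0$ since $Y$ is affine) after observing that every block of $\End_R(R\oplus N)$ touching the summand $R$ lies in $I_{con}$, so that $A/I_{con}\cong \End_R(N)/\cR$ with $\cR$ the maps factoring through projectives. The only difference is in how the hypothesis of Proposition~\ref{DsgGor} is checked: you cite the known fact that $N=f_*\cN$ is maximal Cohen--Macaulay, whereas the paper verifies $\Ext^{>0}_R(N,R)=0$ directly from Grothendieck duality $\bR f_*\bR\sHom_X(\cN,f^!\cO_Y)\simeq \bR\sHom(f_*\cN,\cO_Y)$ and crepancy of $f$ --- the same fact, established by a short computation rather than by citation.
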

\begin{proof}
The quotient functor $\D^b(coh(Y))\to \D_{sg}(Y)$ induces a map from $A$ to $\Hom^0_{D_{sg}}(R\oplus N,R\oplus N)\cong \Hom^0_{D_{sg}}(N,N)$. By local duality
\[
\bR f_*\bR\sHom_{X}(\cN,f^!\cO_Y)\simeq \bR \sHom(f_*\cN, \cO_{Y}),
\]
$\Ext^{>0}_R(N,R)=0$. Proposition \ref{DsgGor} implies that for $i\geq 0$
\[
\Hom_{D_{sg}}(N,N[i])\cong \Ext^i_R(N,N) / \cR
\] where $\cR$ are morphisms from $N$ to $N$ that factors through a projective $R$ module. Take $i=0$ and the theorem is proved.
\end{proof}

Below, we assume that $Y$ is an affine hypersurface with one isolated singular point. 
By taking the completion at the singular point, we may further assume that $B=\CC[[x_1,\ldots,x_n]]$ and $W$ has an isolated critical point at the origin. We define the \emph{Milnor algebra} of $Y$ to be $B_W:=\CC[[x_1,\ldots,x_n]]/J_W$ with $J_W:=(\partial_1 W,\ldots, \partial_n W)$. In this case, it is well known that the Hochschild homology of $\text{MF}(W)$ is isomorphic, as $\ZZ/2$-graded vector spaces, with $B_W$ (for example, see~\cite[Section~2.4]{PV10}). This is in fact a consequence of the following theorem due to Dyckerhoff:
\begin{theorem}\emph{(\cite[Theorem~4.1]{Dyc09})}\label{Dyc}
Let $(B,W)$ be defined as before and consider the residue field $\CC$ as a $B/(W)$-module. Denote the corresponding matrix factorization by $k^{st}$. Then $k^{st}$ is a compact generator of the triangulated category of ${\rm{MF}}(W)$.
\end{theorem}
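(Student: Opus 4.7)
The plan is to split Dyckerhoff's theorem into two parts: compactness and generation of $k^{st}$ in $\mathrm{MF}(W)$. First I would realize $k^{st}$ explicitly. Since $W\in B=\CC[[x_1,\ldots,x_n]]$ vanishes at the origin, one can write $W=\sum_{i=1}^n x_i w_i$ for certain $w_i\in B$, and the stabilization of the residue field $\CC$ then admits a Koszul-type presentation as a matrix factorization on the $\ZZ/2$-graded free $B$-module $\Lambda^{\bullet}B^n$, with odd differential built from multiplication by the $x_i$ and contraction with the $w_i$. Because the underlying modules $E_0, E_1$ are finitely generated free $B$-modules, $k^{st}$ is a perfect matrix factorization, and since compact objects in $\mathrm{MF}(W)$ are precisely the perfect ones, compactness is immediate.

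The heart of the proof is generation, for which the key intermediate object is the DG endomorphism algebra $\cE:=\mathrm{End}_{\mathrm{MF}(W)}(k^{st})$. Using the Koszul-Clifford structure above, I would compute $\cE$ (or an $A_\infty$-quasi-isomorphic model) as the exterior algebra $\Lambda^{\bullet}\CC^n$ in appropriate $\ZZ/2$-grading, equipped with higher $A_\infty$-operations encoding $W$; the compatibility of this description with the Hochschild homology calculation mentioned before the theorem serves as a useful consistency check. With $\cE$ in hand, I would show that the Yoneda-type DG functor
\[
\mathrm{Hom}_{\mathrm{MF}(W)}(k^{st},-):\mathrm{MF}(W)\longrightarrow D(\cE)
\]
is a quasi-equivalence onto perfect DG $\cE$-modules; since the regular module compactly generates the latter and corresponds to $k^{st}$, generation transports back to $\mathrm{MF}(W)$.

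The main obstacle is establishing this Morita-type equivalence, which requires a clean handle on the $\ZZ/2$-graded DG structure and on how $\cE$ controls $\mathrm{MF}(W)$. A direct alternative route to generation is a Nakayama argument: if $\mathrm{Hom}_{\mathrm{MF}(W)}(k^{st},\bar{E}[\ast])=0$ for all shifts, a Koszul resolution of $\CC$ over $B$ identifies this graded Hom-space with a $\ZZ/2$-folded version of $\bar{E}\otimes^{\bL}_B \CC$; this is nonzero whenever the underlying free $B$-modules of $\bar{E}$ are nonzero, forcing $\bar{E}=0$ in $\mathrm{MF}(W)$. The technical difficulty throughout is keeping track of the interaction between the $\ZZ/2$-grading, the curvature arising from $W$, and the ordinary $B$-module Koszul resolution of $\CC$.
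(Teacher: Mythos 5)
First, note that the paper does not prove this statement at all: it is quoted verbatim from Dyckerhoff (\cite[Theorem~4.1]{Dyc09}), so the only meaningful comparison is with Dyckerhoff's actual argument. Your compactness part is essentially fine (the Koszul-type model of $k^{st}$ has finitely generated free components, and Hom out of such an object commutes with coproducts). The problem is with generation, in both of your routes. Route A is circular: the statement that the Yoneda functor $\Hom_{\mathrm{MF}(W)}(k^{st},-)$ is a quasi-equivalence onto (perfect) dg modules over $\cE=\End(k^{st})$ is, by general Morita theory for compact objects, \emph{equivalent} to $k^{st}$ being a compact generator; computing $\cE$ as a deformed Clifford/exterior algebra does not by itself give essential surjectivity on all of $\mathrm{MF}(W)$, which is exactly the content of the theorem.

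Route B contains the genuine gap. The identification of $\Hom_{\mathrm{MF}(W)}(k^{st},\bar E[\ast])$ with a $\ZZ/2$-folded $\bar E\otimes^{\bL}_B\CC$ is false: $k^{st}$ is not (a folding of) the Koszul resolution of $\CC$ over $B$, because the differential of the Koszul matrix factorization also contains the ``curvature'' terms $w_i$ (with $W=\sum x_iw_i$), and the ordinary Koszul resolution of $\CC$ is not an object of $\mathrm{MF}(W)$. What the Koszul structure gives is only a filtration/spectral sequence whose first page involves $\bar E\otimes_B\CC$, with nontrivial higher differentials coming from $W$. A symptom that something must be wrong is that your argument never uses the hypothesis that the critical point of $W$ is isolated, while the theorem fails without it: for $W=x^2$ in $\CC[[x,y]]$ the factorization $(x,x)$, with cokernel $\CC[[y]]$, has infinite-dimensional $\ZZ/2$-graded stable endomorphism algebra, whereas every object of the thick subcategory generated by $k^{st}$ has finite-dimensional Hom-spaces; hence $k^{st}$ does not generate, even though the folded reduction of this object modulo the maximal ideal is visibly nonzero. (There is also the smaller inaccuracy that nonvanishing of the underlying free modules of $\bar E$ does not force nonvanishing of the reduction's cohomology --- contractible factorizations such as $(B\xrightarrow{1}B\xrightarrow{W}B)$ must first be split off by passing to a reduced model.) The missing idea, and the heart of Dyckerhoff's proof, is to use isolatedness through Eisenbud's observation that multiplication by each partial derivative $\partial W/\partial x_i$ is null-homotopic on any matrix factorization; since the Jacobian ideal is then $\mathfrak{m}$-primary, some power $\mathfrak{m}^N$ annihilates all Hom-spaces, so every object is ``supported at the closed point'', and a Koszul d\'evissage (tensoring with Koszul objects on $x_1^N,\dots,x_n^N$, which are built from finite-length modules and hence from $k^{st}$) exhibits any $\bar E$ as a direct summand of an object in the thick subcategory generated by $k^{st}$. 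Without this input your Nakayama argument cannot close.
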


If $f:X\to Y$ is a projective morphism of quasi-projective varieties such that $\bR f_*\cO_X=\cO_Y$, then the functor 
\[
\bL f^*: \D(Y)\to \D(X)
\] embeds $\D(Y)$ as a right admissible triangulated subcategory of $\D(X)$. In particular, this holds for the three dimensional flopping contraction.
Define a subcategory $\cC$ of $\D(X)$ by 
\[
\cC=\{E\in\D(X): \bR f_*(E)=0\}.
\] This is the left orthogonal of $\bL f^*\D(Y)$. The objects in $\cC$ are support on the exceptional locus of $f$.

Denote $\cC^b$ for the subcategory of $\cC$ whose objects $E$ have bounded coherent cohomologies. 
In the case when $f: X\to Y$ is a three dimensional flopping contraction with the exceptional curve denoted by $C$, we have:
  
\begin{prop}
Any object $E\in \cC^b$ is a consequent extensions of shifts of $\cO_C(-1)$.
\end{prop}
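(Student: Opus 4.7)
The plan is to transport the question along the Van den Bergh tilting equivalence $F=\RHom_X(\cU,-):\D^b(coh(X))\xrightarrow{\sim}\D^b(\text{mod-}A)$ and reduce the statement to a standard fact about finite dimensional local algebras. Let $e_0\in A$ denote the idempotent projecting $\cU$ onto the summand $\cO_X$, so that the projective right module $F(\cO_X)$ is identified with $e_0A$. Since $Y$ is affine,
\[
\bR f_\ast E\simeq \bR\Gamma(\bR f_\ast E)\simeq \RHom_X(\cO_X,E)\simeq \RHom_A(e_0A,F(E))\simeq F(E)e_0.
\]
Hence $E\in\cC^b$ if and only if $H^i(F(E))e_0=0$ for every $i$, that is, every cohomology of $F(E)$ is a right module over the contraction algebra $A_{con}=A/Ae_0A$ (cf. Definition~\ref{A_con}).

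Now $A_{con}$ is a finite dimensional basic algebra with a unique simple module $S$. The first assertion is a theorem of Donovan-Wemyss; the second follows because $A$ has exactly two simples $F(\cO_{C_l})$ and $F(\cO_C(-1)[1])$ by Proposition~\ref{prop:simple}, only one of which is annihilated by $e_0$, and the vanishing $\bR f_\ast\cO_C(-1)=0$ identifies $F^{-1}(S)$ with $\cO_C(-1)[1]$. In particular $A_{con}$ is local, so every finite dimensional $A_{con}$-module admits a composition series with every factor isomorphic to $S$, and is therefore a consequent extension of copies of $S$.

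For an arbitrary $E\in\cC^b$ I would then induct on the amplitude of $F(E)$ using the truncation triangles $\tau^{\leq i-1}F(E)\to\tau^{\leq i}F(E)\to H^i(F(E))[-i]$: these realize $F(E)$ as a consequent extension of the shifts $H^i(F(E))[-i]$, and each such cohomology shift is a consequent extension of shifts of $S$ by the previous paragraph. Transporting back along $F^{-1}$ expresses $E$ as a consequent extension of shifts of $\cO_C(-1)$. The main substantive ingredient is the finite dimensionality of $A_{con}$ from the Donovan-Wemyss theory; once that is in hand the rest is a routine d\'evissage using tilting and the structure of finite dimensional local algebras.
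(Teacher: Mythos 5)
Your proof is correct and is essentially the paper's (one-line) argument written out in full: transport along the Van den Bergh tilting equivalence, observe that $E\in\cC^b$ exactly when the cohomologies of $F(E)$ are (finite-dimensional) modules over $A_{con}=A/Ae_0A$, and then d\'evissage using that $S=F(\cO_C(-1)[1])$ is the unique simple such module --- which is what the paper means by ``immediate consequence of Proposition~\ref{prop:simple}.'' The only loose point is your assertion that Proposition~\ref{prop:simple} gives \emph{exactly} two simple $A$-modules; as stated it only exhibits two, and the completeness of that list (equivalently, the locality of $A_{con}$, hence that every composition factor is $S$) is the extra input from Van den Bergh's Proposition~3.5.7 and the Donovan--Wemyss theory, the same input the paper itself tacitly uses.
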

\begin{proof}
This is an immediate consequence of Proposition~\ref{prop:simple}. 
\end{proof}
The
above proposition shows that
$\cC^b$ is generated by the compact object $\cO_C(-1)$. 
Let $\cD$ be a dg-category, $\ZZ$-graded or $\ZZ/2$-graded. If $G$ is a compact generator of $\cD$ then $\cD$ is quasi-equivalent with the category of dg-modules over the $\ZZ$-graded or $\ZZ/2$-graded dg-algebra $\Hom^\bullet_\cD(G,G)$. This is a consequence of the derived Morita theory of dg-categories. The $\ZZ/2$-graded variation of it can be found in~\cite[Chapter~5]{Dyc09}.  In the upcoming section, we will link the categories $\text{MF}(W)$ and  $\cC^b$ to the contraction algebra by choosing appropriate compact generators.

\section{Structures on the contraction algebra}\label{sec:struc}
Assume that $f:X\to Y$ is a three dimensional flopping contraction for $Y=\Spec R$ and $R=\CC[[x,y,z,w]]/(W)$. A coherent sheaf $E$ on $Y$ can be identified with a finitely generated $R$-module $M$. It represents an object in the category $\D_{sg}(Y)$, which is equivalent with $\text{MF}(W)$. We denote the corresponding matrix factorization of $M$ by $M^{st}$, called the \emph{stabilization} of $M$.

From the theorem of Dyckerhoff, we know that $k^{st}$ is a compact generator of $\text{MF}(W)$ for any isolated hypersurface singularities. Though $k^{st}$ contains all the information of the hypersurface singularity, it is usually hard to compute. For the singularity underlying a three dimensional flopping contraction, a different generator exists by the work of Iyama and Wemyss \cite{IW10}.
\begin{prop}\label{generator}
Let $\cU=\cN\oplus\cO_X$ be the tilting bundle on $X$ and $N$ be the $R$-module defined as $f_* \cN$.
Its stabilization $N^{st}$ is a compact generator of ${\rm{MF}}(W)$.
\end{prop}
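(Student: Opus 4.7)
\emph{Plan.} By Dyckerhoff's Theorem~\ref{Dyc}, $k^{st}$ is a compact generator of $\text{MF}(W)$, so it suffices to prove that $k^{st}$ lies in the thick closure of $N^{st}$. Via Orlov's equivalence $\text{MF}(W)\simeq \D_{sg}(R)$ and Buchweitz's identification $\D_{sg}(R)\simeq \underline{\mathrm{MCM}}(R)$ (valid since $R$ is Gorenstein with an isolated singularity), the claim is equivalent to the assertion that $N$ thickly generates the stable category of maximal Cohen--Macaulay $R$-modules.

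The key structural observation would be that $R\oplus N$ realises a noncommutative crepant resolution of $R$. Indeed, the relative-duality computation $\Ext^{>0}_R(N,R)=0$ from the proof of Theorem~\ref{Acon=End} shows, combined with the Gorenstein hypothesis, that $N$ is itself maximal Cohen--Macaulay; meanwhile the tilting equivalence $\D^b(coh(X))\simeq \D^b(\text{mod-}A)$ together with smoothness of $X$ forces $A=\End_R(R\oplus N)$ to have finite global dimension.

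The final ingredient is the theorem of Iyama--Wemyss \cite{IW10}: for any NCCR of a complete local $3$-dimensional Gorenstein ring, the non-projective summand is a compact generator of the stable MCM category. Concretely, for any MCM module $M$ one takes a finite $A$-projective resolution (available by finite global dimension of $A$), transports it through the Morita equivalence $\mathrm{proj}\,A\simeq \mathrm{add}_R(R\oplus N)$ to a finite resolution of $M$ by summands of $R\oplus N$, and observes that the $R$-summands vanish in $\underline{\mathrm{MCM}}(R)$, presenting $M$ as a finite iterated extension of summands of $N$ in the stable category.

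I expect the main obstacle to be the last step: verifying that the intermediate syzygies of $M$ taken inside $\mathrm{add}_R(R\oplus N)$ remain maximal Cohen--Macaulay, so that the resulting chain of short exact sequences descends to genuine triangles in $\underline{\mathrm{MCM}}(R)$, is the substantive content of the Iyama--Wemyss generation result. The remaining pieces -- Dyckerhoff's theorem, Orlov's equivalence, and the Buchweitz identification -- are standard and assemble routinely once this MCM-stability of syzygies is in hand.
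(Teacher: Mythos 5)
Your proposal is correct and takes essentially the same route as the paper: the paper likewise deduces generation from the tilting property of $R\oplus N$ by citing Proposition~5.10 of \cite{IW10} (the NCCR/finite-global-dimension resolution argument you sketch is precisely what that citation encapsulates) and gets compactness from \cite[Section~4]{Dyc09}. The only point worth making explicit is that compactness of $N^{st}$ itself, not just of $k^{st}$, is also needed and follows from Dyckerhoff since $N$ is a finitely generated module.
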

\begin{proof}
Because $R\oplus N$ is a tilting object, proposition 5.10 of \cite{IW10} implies that $N$ is a generator. By~\cite[Section~4]{Dyc09}, $N^{st}$ is a compact object.
\end{proof}
For a general $R$-module $M$, the derived pull back $\bL f^*M$ is unbounded. However, for those modules that lie in the perverse heart in the sense 
of~\cite[Section~3]{VdB04}. We have the following vanishing properties for the derived pullbacks proved by Bodzenta and Bondal. 

\begin{theorem}\emph{(\cite[Lemma~3.5, 3.6]{BB15})}\label{L1v}
Suppose $f : X \to Y$ is a projective birational morphism of relative dimension one of quasi-projective Gorenstein varieties of dimension $n\geq 3$. The exceptional locus of $f$ is of codimension greater than one in $X$. Variety $Y$ has canonical hypersurface singularities of multiplicity two. Then $L^{-1}f^*f_*\cN=0$ for any object $\cN$ in the heart of $^{-1}\text{Per}(X/Y)$. 
\end{theorem}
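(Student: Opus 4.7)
The plan is to follow the strategy of Bodzenta--Bondal. The vanishing is local on $Y$, so I would first pass to the formal completion at a singular point $p$, writing $R := \hat{\cO}_{Y,p} \cong \CC[[x_0,\ldots,x_n]]/(g)$ with $g$ of multiplicity two. The sheaf $f_*\cN$ then corresponds to a finitely generated $R$-module $M$, and the statement becomes $\Tor_1^R(\cO_{\hat{X}}, M) = 0$, where $\hat{X} := X \times_Y \Spec R$.

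Next I would exploit the perverse heart conditions to pin down the structure of $M$. For $\cN \in {}^{-1}\text{Per}(X/Y)$, the cohomology sheaves satisfy $f_* H^{-1}(\cN) = 0$ and $R^1 f_* H^0(\cN) = 0$, so a spectral sequence argument for $\bR f_*$ reduces us to the case where $\cN$ is a coherent sheaf with $R^1 f_*\cN = 0$. In this case, Grothendieck duality combined with the Gorenstein hypotheses and the fact that $\bR f_*\cN$ is concentrated in degree zero shows that $M = f_*\cN$ is a maximal Cohen--Macaulay $R$-module. This is implicit in Van den Bergh's tilting construction and is precisely what makes the perverse heart the natural setting for the lemma.

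Since $R$ is a hypersurface ring and $M$ is MCM, $M$ admits a $2$-periodic free resolution given by a matrix factorization $(\varphi, \psi)$ of $g$. Then $\Tor_1^R(\cO_{\hat{X}}, M)$ is computed from the two-term complex $\cO_{\hat{X}}^{\oplus m} \xrightarrow{\psi} \cO_{\hat{X}}^{\oplus m} \xrightarrow{\varphi} \cO_{\hat{X}}^{\oplus m}$ obtained by pulling the matrix factorization back along $\hat{f}$. The multiplicity-two hypothesis on $g$ enters here: after reducing $g$ to a suitable local form in which its quadratic part is made explicit, the pulled-back matrix factorization becomes locally isomorphic to a Koszul-type complex on $\hat{X}$ whose exactness at the middle spot can be controlled using that the exceptional locus has codimension at least two.

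The main obstacle, and where both hypotheses (one-dimensional fibers and hypersurface multiplicity two) are genuinely used, is making the last comparison precise enough to conclude vanishing at the $\Tor_1$ spot. I expect this to require a careful analysis of how the matrix factorization of $M$ restricts along the exceptional curve, combined with depth estimates on $\hat{X}$: the codimension-two condition bounds how deep a cycle in the pulled-back complex can sit, while the multiplicity-two condition ensures that the differentials of the matrix factorization have just enough control in the maximal ideal to annihilate any such cycle after pullback.
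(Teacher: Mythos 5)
You should first be aware that the paper contains no proof of Theorem~\ref{L1v} to compare against: it is imported verbatim from \cite{BB15} (Lemmas~3.5, 3.6), so your sketch can only be judged on its own terms, and as written it has two genuine gaps. The first is the step ``Grothendieck duality plus $\bR f_*\cN$ concentrated in degree zero shows $M=f_*\cN$ is maximal Cohen--Macaulay.'' This is false at the level of generality you (and the literal wording of the statement) are working in: the perverse heart contains $\cO_{C_l}$, since $F(\cO_{C_l})$ is one of the two simple $A$-modules (Proposition~\ref{prop:simple}), and $f_*\cO_{C_l}$ is a nonzero module of finite length, hence not MCM. Worse, for such objects the asserted vanishing itself breaks down: for the conifold $R=\CC[[x,y,z,w]]/(xy-zw)$ one has $f_*\cO_{C_l}=R/\mathfrak{m}$, and $\Tor_1^R(\cO_{X,q},R/\mathfrak{m})\neq 0$ at every point $q$ of the exceptional curve, because $\Tor_1^R(\cO_{X,q},R/\mathfrak{m})=0$ would make $\cO_{X,q}$ flat over $R$ by the local criterion of flatness, contradicting the jump of fibre dimension (an explicit nonzero class is $y\otimes 1-w\otimes T$ in $\mathfrak{m}\otimes_R\cO_X$ on the chart $z=Tx$, $y=Tw$). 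So the result cannot be fed ``any object of the heart'' through your MCM reduction; the hypotheses must be parsed more carefully, and note that the present paper only ever invokes the vanishing for the tilting summand $\cN$ (Corollary~\ref{Ext1v}), for which $N=f_*\cN$ is indeed MCM since $\Ext^{>0}_R(N,R)=0$ as in the proof of Theorem~\ref{Acon=End}. Your reduction to the sheaf case is also incomplete even where it makes sense: for a two-term heart object $E$, $\bR f_*E$ is an extension of $f_*H^0(E)$ by $R^1f_*H^{-1}(E)$, so you must additionally handle modules of the form $R^1f_*F$ with $f_*F=0$, which is not the case ``$\cN$ a sheaf with $R^1f_*\cN=0$'' you reduce to.

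The second, and decisive, gap is the final step. Granting that $M$ is MCM with matrix factorization $(\varphi,\psi)$, the entire content of the lemma is the exactness of the pulled-back $2$-periodic complex at the middle spot, equivalently that $f^*(\mathrm{coker}\,\psi)$ has no nonzero subsheaf supported on the exceptional locus. Your proposal only says you ``expect'' this to follow from a ``careful analysis'' combining multiplicity two with the codimension bound; no argument is given, and this is precisely where both hypotheses must do real work. For instance, multiplicity two constrains a reduced size-$m$ matrix factorization through $\det\varphi\cdot\det\psi$ being a unit times $g^m$, with the multiplicity count forcing both determinants to have multiplicity exactly $m$, and one must then convert such information, together with the codimension $\geq 2$ condition on the exceptional locus, into the required torsion-freeness of the pullback; none of this is carried out, and the phrase ``Koszul-type complex'' is not substantiated. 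As it stands the proposal is a reasonable plan (completion, reduction to MCM modules, matrix factorizations) whose load-bearing step is missing, so it cannot be accepted as a proof of the statement even in the restricted form in which the paper actually uses it.
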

Notice that the assumption of the above theorem is satisfied for the three dimensional flopping contraction.
\begin{corollary}\label{Ext1v}
Let $\cN$ and $N$ be defined in the previous section. Then 
we have $\Ext^1_{D_{sg}}(N,N)=0$ and 
\begin{align}\label{Acon:id}
\Hom^{\bullet}_{\D_{sg}}(N,N)=\Hom^0_{\D_{sg}}(N,N)=A_{con}. 
\end{align}
\end{corollary}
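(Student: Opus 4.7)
The plan is to reduce the problem to the single vanishing $\Ext^1_{\D_{sg}}(N,N)=0$; since $\D_{sg}(Y)\simeq \mathrm{MF}(W)$ is $\ZZ/2$-graded, $\Hom^{\bullet}_{\D_{sg}}(N,N)$ has components only in degrees $0$ and $1$ modulo $2$, so the vanishing together with Theorem~\ref{Acon=End} yields both identities in~\eqref{Acon:id}.

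For the vanishing, I would first apply Proposition~\ref{DsgGor} in degree one. Because $Y=\Spec R$ is affine, $\Ext^{>0}(\cP,\cG)=0$ for any locally free $\cP$ and coherent $\cG$, and $\sExt^{>0}(N,\cO_Y)=0$ since $N$ is maximal Cohen-Macaulay (as already used in the proof of Theorem~\ref{Acon=End}). The proposition then gives
\[
\Ext^1_{\D_{sg}}(N,N)=\Ext^1_R(N,N)/\cR,
\]
where $\cR$ consists of morphisms factoring through a projective, reducing the problem to $\Ext^1_R(N,N)=0$.

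Next I would transport this group to $X$ via the adjunction $\bR f_{\ast}\dashv f^{!}$. Since $f$ is a crepant birational morphism of Gorenstein threefolds, Grothendieck duality gives $f^{!}=f^{\ast}$, and combined with $\bR f_{\ast}\cN=N$ this yields
\[
\Ext^1_R(N,N)=\Hom_{\D(Y)}(\bR f_{\ast}\cN,N[1])=\Hom_{\D(X)}(\cN,f^{\ast}N[1])=\Ext^1_X(\cN,f^{\ast}N).
\]
By Theorem~\ref{L1v}, $\bL f^{\ast}N=f^{\ast}N$ is concentrated in degree zero and lies in the perverse heart ${}^{-1}\mathrm{Per}(X/Y)$. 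Since the tilting bundle $\cU=\cO_X\oplus \cN$ is the projective generator of ${}^{-1}\mathrm{Per}(X/Y)$ underlying Van den Bergh's equivalence of hearts ${}^{-1}\mathrm{Per}(X/Y)\simeq \mathrm{mod}\text{-}A$, its direct summand $\cN$ is a projective object of that heart, so $\Ext^1_X(\cN,f^{\ast}N)=0$.

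The main subtlety is using the adjunction $\bR f_{\ast}\dashv f^{!}=f^{\ast}$ rather than $\bL f^{\ast}\dashv \bR f_{\ast}$, so that the projective object $\cN$ appears as the \emph{source} of the Ext group; with the opposite adjunction the source would be $f^{\ast}N$, which is not in general projective in the perverse heart, and no vanishing would follow.
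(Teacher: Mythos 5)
The pivotal step of your argument is the identification $f^{!}\cong \bL f^{*}$, which you use to rewrite $\Ext^1_R(N,N)=\Hom_Y(\bR f_{*}\cN,N[1])$ as $\Hom_X(\cN,\bL f^{*}N[1])=\Ext^1_X(\cN,f^{*}N)$, and this is exactly where there is a genuine gap. Crepancy gives $f^{!}\cO_Y\cong\cO_X$, and the comparison map $\bL f^{*}(-)\otimes^{\bL}f^{!}\cO_Y\to f^{!}(-)$ is an isomorphism only on perfect complexes, or for arbitrary complexes when $f$ has finite Tor-dimension. Neither applies here: $N$ is a non-free maximal Cohen--Macaulay module over the hypersurface $R$, hence of infinite projective dimension, and $f$ is not of finite Tor-dimension (the paper itself notes that $\bL f^{*}M$ is unbounded for a general $R$-module $M$). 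Indeed $f^{!}N$ (computable as $\bR\sHom_X(\bL f^{*}\bR\sHom_Y(N,\cO_Y),\cO_X)$ up to the trivial crepancy twist) is bounded below, while $\bL f^{*}N$ is bounded above and in general unbounded below, so there is no reason for the two to agree; your reduction of $\Ext^1_R(N,N)$ to $\Ext^1_X(\cN,f^{*}N)$ is therefore unjustified. Two secondary points: Theorem~\ref{L1v} only asserts $L^{-1}f^{*}f_{*}\cN=0$, not that $\bL f^{*}N$ is concentrated in degree zero; and the membership $f^{*}N\in{}^{-1}\mathrm{Per}(X/Y)$, while true, requires its own (easy) argument --- $\Hom_X(f^{*}N,C)=\Hom_Y(N,f_{*}C)=0$ for any $C$ with $\bR f_{*}C=0$, and $R^1f_{*}f^{*}N=0$ because $f^{*}N$ is a quotient of $\cO_X^{\oplus m}$ --- it is not contained in Theorem~\ref{L1v}.

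For comparison, the paper avoids Grothendieck duality altogether: it uses the ordinary adjunction $\bL f^{*}\dashv\bR f_{*}$, so that $\Ext^1_R(N,N)\cong\Ext^1_X(\bL f^{*}f_{*}\cN,\cN)$ with the derived pullback as the \emph{source} and the vector bundle $\cN$ as the \emph{target}, and then deduces the vanishing from Theorem~\ref{L1v} together with the spectral sequence argument. Your closing remark dismisses this direction on the grounds that the source is not projective in the perverse heart, but projectivity is not what the paper's proof uses; the vanishing there comes from $L^{-1}f^{*}f_{*}\cN=0$ and cohomological bounds, not from repositioning $\cN$ via an adjunction. If you wish to keep your strategy of exploiting projectivity of $\cN$ in ${}^{-1}\mathrm{Per}(X/Y)$, you must first establish $f^{!}N\cong f^{*}N$ (or otherwise identify $\Ext^1_R(N,N)$ with an $\Ext^1$ group having $\cN$ as source); at present that identification is the missing, and doubtful, ingredient.
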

\begin{proof}
The result of Van den Bergh shows that the 
vanishing theorem in Theorem~\ref{L1v} is applicable to $\cN$.
By proposition \ref{DsgGor}, it suffices to show that $\Ext^1_R(N,N)$ vanishes. By adjunction, $\Ext^1_R(N,N)\cong \Ext^1_X(\bL f^*f_*\cN,\cN)$. The vanishing 
$\Ext^1_{D_{sg}}(N, N)=0$ 
follows from Theorem \ref{L1v} and the local to global spectral sequence.
The identities (\ref{Acon:id}) now follow from 
Theorem~\ref{Acon=End} and the above vanishing. 
\end{proof}
Applying the derived Morita theory of $\ZZ/2$-graded dg-category (c.f. ~\cite[Chapter~5]{Dyc09}), 
proposition \ref{generator} implies that $\text{MF}(W)$ is equivalent to the category of dg-modules over the $\ZZ/2$-graded
dg-algebra $\Hom^{\bullet}_{\text{MF}(W)}(N^{st},N^{st})\cong \Hom^{\bullet}_{\D_{sg}}(N,N)$. 
We denote $A_{con}^\infty$ for the minimal model of $\Hom^{\bullet}_{D_{sg}}(N,N)$ and call it the \emph{$\infty$-contraction algebra}. The contraction algebra $A_{con}$ is the algebra underlying $A^\infty_{con}$ forgetting the higher Massey products. In particular, $A_{con}^\infty$ and $A_{con}$ have the same underlying vector space. The $A_\infty$-structure on $A^{\infty}_{con}$ is always nontrivial unless the singularity is non-degenerated (which means the Hessian is non-degenerated). We will study this  
in Section~\ref{sec:inf} by one example. Moreover, the $A_\infty$-structure on $A_{con}^\infty$ is conjectured to reconstruct the local ring of the singularity
 (see conjecture~\ref{GenDWconj}).

In the rest of this section, we construct a Frobenius structure on $A_{con}$. 
Let $W$ be a function in $B:=\CC[[x_1,\ldots,x_n]]$ with isolated critical point at the origin.
Recall from~\cite[Section~2.4]{PV10} that
\[
\HH_\bullet(\text{MF}(W))\cong B_W\cdot d{\bf{x}}[n],
\] where $d{\bf{x}}=d x_1\wedge\ldots \wedge d x_n$.

Given a matrix factorization $\bar{E}=(E,\delta_E)$, its chern character $ch(\bar{E})\in \HH_0(\text{MF}(W))$ is defined to be
\[
ch(\bar{E})=\text{str}(\partial_n\cdot\ldots \partial_1)\cdot d{\bf{x}}\ \text{mod}\ J_W\cdot d{\bf{x}}
\] where $\text{str}$ is the supertrace of the matrix with entries in $B$.
There is a canonical bilinear form on $\HH_\bullet(\text{MF}(W))$ with the form:
\[
\langle f\otimes d{\bf{x}},g\otimes d{\bf{x}}\rangle = (-1)^{\frac{n(n-1)}{2}} \tr (f\cdot g),
\]
where $\tr$ is the trace on the Milnor algebra defined by the generalized residue:
\[
\tr(f)=\Res \left[\begin{array}{ccccc}f(x)\cdot dx_1\wedge\ldots \wedge dx_n\\
\partial_1 W,\ldots \partial_n W\end{array}\right].
\]
The Hirzebruch-Riemann-Roch formula for matrix factorization, proved by Polishchuk and Vaintrob (\cite{PV10}) says: 
\begin{equation}\label{HRR}
\chi(\bar{E},\bar{F})=\langle ch(\bar{E}),ch(\bar{F})\rangle.
\end{equation}

More generally, we can define a canonical ``boundary-bulk'' map
\[
\tau^{\bar{E}}: \Hom^\bullet(\bar{E},\bar{E})\to \HH_\bullet(\text{MF}(W))
\] 
by 
\[
\tau^{\bar{E}}(\alpha)=\text{str}(\partial_n \delta_E\cdot\ldots \partial_1\delta_E\circ \alpha)\cdot d{\bf{x}}\ {\rm{mod}}\ J_W\cdot d{\bf{x}}.
\]
It is clear that the chern character $ch(\bar{E})$ is a special case of $\tau^{\bar{E}}$ when $\alpha$ equals to the identity map of $\bar{E}$. 

By theorem \ref{Dyc}, $\text{MF}(W)$ is a saturated dg-category. A fundamental theorem of Auslander \cite{Aus78} states the Serre functor $\bf{S}$  on
$\text{MF}(W)$ is $(-)[n-2]$. Then there is a perfect pairing:
\[
\sigma: \Hom^i({\bf{S}}(\bar{E}),\bar{F})\otimes_\CC\Hom^i(\bar{F},\bar{E})\to\CC
\] for all $i$. Because $\text{MF}(W)$ is 2-periodic, the Serre functor is the identity functor if $n=4$. Apply to the case $\bar{E}=\bar{F}=N^{st}$, we have a perfect pairing: 
\[
\sigma: A_{con}\otimes_\CC A_{con}\to\CC
\]
In \cite{Mur13}, Murfet constructs $\sigma$ explicitly in term of the boundary bulk map. Apply Murfet's construction to our situation gives the following. Given $\alpha$ and $\beta$ in $A_{con}$, the pairing is 
\[
\sigma(\alpha,\beta)=\Res(\tau^{N^{st}}(\alpha\circ\beta)).
\]
The following proposition follows immediately. 
\begin{prop}\label{Frob}
The contraction algebra $A_{con}$ together with the bilinear form $\sigma$ forms a Frobenius algebra.
\end{prop}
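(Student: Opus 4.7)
The plan is to verify the three defining properties of a Frobenius algebra: finite-dimensionality of $A_{con}$, non-degeneracy of $\sigma$, and associativity of $\sigma$ with respect to the algebra product.

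Finite-dimensionality of $A_{con}$ is already built into the setup, since $A_{con}$ coincides with $\Hom^0_{\D_{sg}(Y)}(N,N)$ by Theorem~\ref{Acon=End}, and for an isolated hypersurface singularity this graded endomorphism space is finite-dimensional over $\CC$ (it lies in the 2-periodic matrix factorization category with finite-dimensional Hom spaces between compact objects). Non-degeneracy of $\sigma$ is essentially given to us: Auslander's theorem identifies the Serre functor on $\text{MF}(W)$ (with $n = 4$) as the identity, so Serre duality produces a perfect pairing on $\Hom^{\bullet}(N^{st}, N^{st})$, and Murfet's explicit formula identifies this Serre pairing with the residue of the boundary-bulk map. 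Restricting to the degree zero component $A_{con}$ (using Corollary~\ref{Ext1v} that the odd piece and higher pieces are controlled by this identification), one concludes that $\sigma(\alpha, \beta) = \Res(\tau^{N^{st}}(\alpha \circ \beta))$ is a perfect pairing.

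The associativity $\sigma(\alpha\beta, \gamma) = \sigma(\alpha, \beta\gamma)$ follows formally from the explicit formula: both sides equal $\Res(\tau^{N^{st}}(\alpha \circ \beta \circ \gamma))$ because composition in the endomorphism algebra is associative and $\tau^{N^{st}}$ depends only on the composed morphism. Concretely, one writes
\[
\sigma(\alpha\beta, \gamma) = \Res\bigl(\tau^{N^{st}}((\alpha \circ \beta) \circ \gamma)\bigr) = \Res\bigl(\tau^{N^{st}}(\alpha \circ (\beta \circ \gamma))\bigr) = \sigma(\alpha, \beta\gamma),
\]
which is exactly the Frobenius compatibility.

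I do not anticipate any serious obstacle; the whole content of the proposition is that the Serre-duality pairing on the endomorphism algebra of a compact generator in a Calabi-Yau dg-category of dimension zero (which is what $\text{MF}(W)$ is when $n = 4$) automatically makes the endomorphism algebra Frobenius. The two nontrivial inputs, Auslander's identification of the Serre functor and Murfet's residue formula for the Serre pairing, are already quoted in the text, so the proof should amount to a short invocation of these facts plus the trivial associativity calculation above.
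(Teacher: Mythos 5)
Your proposal is correct and follows essentially the same route as the paper, which treats the proposition as an immediate consequence of the preceding setup: Auslander's identification of the Serre functor on $\text{MF}(W)$ as the identity for $n=4$, the resulting perfect pairing on $\Hom^{\bullet}(N^{st},N^{st})$ (which equals $A_{con}$ in view of Corollary~\ref{Ext1v}), and Murfet's residue formula for that pairing. Your explicit check of associativity via $\Res(\tau^{N^{st}}(\alpha\circ\beta\circ\gamma))$ simply spells out what the paper leaves implicit.
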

The Milnor algebra $B_W$ is a commutative Frobenius algebra. The contraction can be viewed as a noncommutative analogue of $B_W$. We will show later that compare with $B_W$, the Frobenius algebra $A_{con}$ behaves better under deformations. Like $B_W$, the contraction algebra $A_{con}$ is also augmented. Given $\alpha\in A_{con}$, we call $\Res(\tau^{N^{st}}(\alpha))$ the \emph{trace} of $\alpha$. The following remark shows that there exists a canonical element (up to scalar) in $A_{con}$ with nonzero trace.
\begin{remark}\label{soc}
Denote the Jacobson ideal of $A_{con}$ by $J(A_{con})$. It is nothing but the kernel of the augmentation map. Denote $\text{soc}(A_{con})$ for the left ideal of $A$ (treated as a submodule of $_A A$) that is annihilated by $J(A_{con})$. By corollary 5.7 \cite{DW13}, $A_{con}$ is a self-injective algebra (therefore Auslander-Gorenstein). Denote the simple right $A_{con}$-module $F(\cO_C(-1)[1])$ by $S$. Apply $\Hom(-, A_{con})$ to the short exact sequence of right $A_{con}$-modules:
\[
\xymatrix{0\ar[r] &J(A_{con})\ar[r] &A_{con}\ar[r] &S\ar[r] &0 }
\]
By the self-injective property,  $\text{soc}(A_{con})$ is isomorphic to $\Hom_{A_{con}}(S,A_{con})$ as left $A_{con}$-modules. Therefore, $\text{soc}(A_{con})$ is one dimensional over $\CC$. Choose a generator $\alpha$ of $\text{soc}(A_{con})$. Because $\text{soc}(A_{con})$ is contained in all the nonzero ideals of $A_{con}$, the Frobenius property implies that $\Res(\tau^{N^{st}}(\alpha))$ is non zero. Moreover, $\tau^{N^{st}}(\text{soc}(A_{con}))=\text{soc}(B_W)$.
\end{remark}

\begin{prop}\label{commutator}
The boundary bulk map $\tau^{N^{st}}$ factors through $A_{con}/[A_{con},A_{con}]$.
\end{prop}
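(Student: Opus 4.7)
The plan is to derive the statement from the super-trace (super-cyclic) property of the boundary-bulk map. Specifically, I will use the identity
\[
\tau^{N^{st}}(ab)\;=\;(-1)^{|a||b|}\,\tau^{N^{st}}(ba)\quad\text{in } \HH_\bullet(\text{MF}(W))\cong B_W\cdot d{\bf x}
\]
for any homogeneous $a,b\in \Hom^\bullet_{\text{MF}(W)}(N^{st},N^{st})$. Restricted to $a,b\in A_{con}=\Hom^0_{\D_{sg}}(N,N)$, which lives entirely in even parity, the sign disappears and the super-commutator collapses to the ordinary commutator, so $\tau^{N^{st}}(\alpha\beta)=\tau^{N^{st}}(\beta\alpha)$ for all $\alpha,\beta\in A_{con}$. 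This is exactly the claim that $\tau^{N^{st}}$ descends to $A_{con}/[A_{con},A_{con}]$.

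To establish the super-cyclicity itself, I would identify $\tau^{N^{st}}$ with the categorical Chern character / trace map attached to the compact generator $N^{st}\in\text{MF}(W)$. By general properties of Chern characters in dg-enhanced triangulated categories, this map factors canonically through the zeroth Hochschild homology of the endomorphism dg-algebra,
\[
\Hom^\bullet_{\text{MF}(W)}(N^{st},N^{st})\longrightarrow \HH_0(\Hom^\bullet_{\text{MF}(W)}(N^{st},N^{st}))\longrightarrow \HH_\bullet(\text{MF}(W)),
\]
and $\HH_0$ of any dg-algebra is by construction the quotient by super-commutators. A direct verification using the explicit formula $\tau^{N^{st}}(\phi)=\text{str}(\partial_4\delta\cdot\partial_3\delta\cdot\partial_2\delta\cdot\partial_1\delta\cdot\phi)\,d{\bf x} \mod J_W\,d{\bf x}$ would combine (i) super-cyclicity of the matrix super-trace $\text{str}$, (ii) the identity $[\partial_i\delta,\alpha]=-[\delta,\partial_i\alpha]$ obtained by differentiating the closedness relation $\delta\alpha=\alpha\delta$, and (iii) the matrix factorization relation $\delta^2=W\cdot\text{id}$, which lets one push the residual terms into $J_W\cdot B$.

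The main obstacle is the sign-and-ideal bookkeeping in the direct computation. I would sidestep it by citing the super-trace property of the boundary-bulk map as established in the matrix factorization literature (Polishchuk--Vaintrob~\cite{PV10}, Murfet~\cite{Mur13}); this reduces the proof to the single observation that $A_{con}$ sits in even parity, hence super-commutators coincide with ordinary commutators on it.
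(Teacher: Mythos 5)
Your proposal is correct and takes essentially the same route as the paper: the paper also reduces the claim to the trace/cyclicity property of the boundary-bulk map by citing Polishchuk--Vaintrob (Lemmas 1.1.3 and 1.2.4 of \cite{PV10}), interpreting $\tau^{N^{st}}$ as the trace of a natural transformation of functors and concluding it vanishes on $[A_{con},A_{con}]$. Your extra observation that $A_{con}$ sits in even parity, so super-commutators coincide with ordinary commutators there, is a harmless refinement of the same argument.
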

\begin{proof}
 By Lemma 1.1.3 and Lemma 1.2.4 of \cite{PV10}, $\tau^{N^{st}}$ can be interpreted as the trace of a natural transform from the functor $\RHom_{\text{MF(W)}}(N^{st},-)\otimes N^{st}$ to the identity functor. Therefore, it is zero on $[A_{con},A_{con}]$.
\end{proof}

To finish this section, we prove that the category $\cC^b$ defined in the previous section can be reconstructed from the contraction algebra. Let $S$ be the simple $A$-module defined in remark \ref{soc}. Because $\Hom_X(\cO_C(-1),\cO_X)=0$, $S$ is an $A_{con}$-module. The surjection $A\to A_{con}$ defines a functor $j_*: \text{mod-}A_{con}\to \text{mod-}A$. Under this notation, $j_*S$ is simply $S$ treated as an $A$-module.
Because $\cO_C(-1)$ is a compact generator of $\cC^b$, one can reconstruct $\cC^b$ from the $A_\infty$-algebra $\Ext^\bullet_X(\cO_C(-1),\cO_C(-1))$, which is isomorphic to $\Ext^\bullet_A(j_*S,j_*S)$ under the equivalence $F[1]$.
The reconstruction theorem of $\cC^b$ from $A_{con}$ can be proved by relating the deformation theory of $S$ and that of $j_*S$.
\begin{theorem}\label{ReconCb}
Suppose that $X\to Y$ and $X^\prime\to Y^\prime$ are three dimensional flopping contractions in smooth quasi-projective 3-folds, to points $p$ and $p^\prime$ 
respectively. Assume that $Y$ and $Y^\prime$ are spectrums of complete local rings. To these, associate the contraction algebras $A_{con}$ and $A^\prime_{con}$. Denote the category $\cC^b$ associated to $X$ and $X^\prime$ by $\cC^b(X)$ and $\cC^b(X^\prime)$. They are equivalent if $A_{con}\cong A^\prime_{con}$.
\end{theorem}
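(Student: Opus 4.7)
The plan is to prove the theorem in two stages: first, reduce $\cC^b(X)$ to the derived category of perfect modules over an $A_\infty$-algebra of $\Ext$'s; second, reconstruct that $A_\infty$-algebra from $A_{con}$ via the bar--cobar / Koszul-duality correspondence between augmented finite-dimensional algebras and their deformation $\Ext$-algebras.

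For the reduction, the proposition preceding the theorem shows that $\cO_C(-1)$ is a compact generator of $\cC^b(X)$. By the derived Morita theory for dg-categories recalled after Theorem~\ref{Dyc}, this yields a quasi-equivalence $\cC^b(X) \simeq \Perf(E_X)$, where $E_X := \Ext^\bullet_X(\cO_C(-1),\cO_C(-1))$ is equipped with its natural $A_\infty$-structure. Van den Bergh's tilting equivalence $F = \RHom_X(\cU,-)$ sends $\cO_C(-1)[1]$ to the simple $A$-module $S = j_*S$ of Proposition~\ref{prop:simple}, and so transports $E_X$ to an $A_\infty$-algebra quasi-isomorphic to $\Ext^\bullet_A(j_*S, j_*S)$. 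Performing the same construction for $X'$ yields an analogous $A_\infty$-algebra $E_{X'}$, and the theorem will follow once it is shown that $E_X$ depends, up to $A_\infty$-quasi-isomorphism, only on $A_{con}$ as an augmented algebra.

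For the reconstruction, the key input is Donovan--Wemyss's defining property that $A_{con}$ pro-represents the noncommutative deformation functor of $j_*S$ as an $A$-module; this is where the deformation theories of $S$ and $j_*S$ come together. Under the classical bar--cobar / Koszul-duality correspondence between augmented (pro-)Artinian algebras and augmented $A_\infty$-algebras with finite-dimensional cohomology in each degree (see, e.g., Keller, Efimov--Lunts--Orlov, Segal), the pro-representing algebra of this deformation functor and the $A_\infty$-algebra $E_X$ of self-$\Ext$'s of the deformed module determine each other up to quasi-isomorphism; concretely, $E_X$ can be realised as the reduced bar dual of $A_{con}$. The required finiteness conditions hold in our situation: $A_{con}$ is finite-dimensional (its dimension being the noncommutative width), and each $\Ext^i_X(\cO_C(-1),\cO_C(-1))$ is finite-dimensional by smoothness of $X$ and properness of $C$. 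Applying this recipe on both sides and invoking $A_{con}\cong A'_{con}$ produces an $A_\infty$-quasi-isomorphism $E_X\simeq E_{X'}$, whence $\cC^b(X) \simeq \Perf(E_X) \simeq \Perf(E_{X'}) \simeq \cC^b(X')$.

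The hardest step will be checking carefully that the pro-representing object of Donovan--Wemyss's noncommutative deformation functor (built via Laudal's formalism on noncommutative Artinian test algebras, and using $Y$ being the spectrum of a complete local ring to guarantee convergence of the completions) really coincides with the Maurer--Cartan pro-representing object attached to the $A_\infty$-algebra $E_X$ by the bar construction. Once the two formalisms are matched, the rest of the argument is a formal consequence of the bar--cobar machinery and of the Morita reduction in the first stage.
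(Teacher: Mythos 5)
Your first stage (Morita reduction to the $A_\infty$-algebra $E_X=\Ext^\bullet_A(j_*S,j_*S)$, via the compact generator $\cO_C(-1)$ and the tilting equivalence) is exactly how the paper begins. The second stage, however, contains a genuine gap: it is not true that $E_X$ is the reduced bar (Koszul) dual of $A_{con}$, and bar--cobar duality alone does not let $A_{con}$ and $E_X$ ``determine each other.'' The bar dual of $A_{con}$ has cohomology $\Ext^\bullet_{A_{con}}(S,S)$, which is unbounded in positive degrees (over the finite-dimensional, self-injective but non-semisimple algebra $A_{con}$ the simple $S$ has infinite projective dimension), whereas $E_X$ is concentrated in degrees $0,\dots,3$ because $X$ is a smooth $3$-fold; so the two cannot be quasi-isomorphic. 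What pro-representability of the noncommutative deformation functor gives you is only that $A_{con}$ is the degree-zero cohomology of the Koszul dual of $E_X$ (equivalently, it encodes the products $(\Ext^1)^{\otimes k}\to \Ext^2$), and in general one cannot recover a full $A_\infty$-algebra from the $H^0$ of its Koszul dual; the finiteness conditions you verify are not the relevant hypotheses, and ``matching the two formalisms'' would not close this gap.

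The extra input needed is precisely what the paper supplies. It uses the dg-lift of $j_*$, the adjunction $\Ext^\bullet_A(j_*S,j_*S)\simeq \Ext^\bullet_{A_{con}}(\bL j^*j_*S,S)$, and the Donovan--Wemyss four-term projective resolution $0\to P\to Q_1\to Q_0\to P\to A_{con}\to 0$ of $A_{con}$ over $A$ to compute $\Tor^A_q(j_*S,A_{con})=S$ for $q=0,3$ and $0$ for $q=1,2$. The resulting change-of-rings spectral sequence has two rows, both equal to $A^!_{con}:=\Ext^\bullet_{A_{con}}(S,S)$, and its degree-four differential is an isomorphism; this identifies $E_X$, with its $A_\infty$-structure, with $A^!_{con}\oplus A^!_{con}h$ ($h$ of degree $3$, $m_1(x+yh)=d(y)$), which manifestly depends only on the augmented algebra $A_{con}$. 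In other words, the reconstruction works not by abstract Koszul duality but because of this specific (Calabi--Yau-type) periodicity relating $E_X$ to a two-row truncation of the Koszul dual of $A_{con}$; if you want to salvage your route, you must prove such a comparison rather than cite bar--cobar duality.
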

\begin{proof}
As $\cC^b$ is equivalent to the triangulated category of the category of modules over the $A_\infty$-algebra $\Ext_A^\bullet(j_*S,j_*S)$, it suffices 
to show this algebra (together with its $A_\infty$-structure) can be reconstructed from $A_{con}$. 

By the work of Keller \cite{Keller99}, the functor $j_*: \text{mod-}A_{con}\to \text{mod-}A$ admits a dg-lifting.  Therefore, it induces a morphism of $A_{\infty}$-algebras from $\Ext_{A_{con}}^\bullet(S,S)$ to  $\Ext_A^\bullet(j_*S,j_*S)$.  By adjunction, 
\[
\Ext_A^\bullet(j_*S,j_*S)\simeq \Ext_{A_{con}}^\bullet(\bL j^*j_*S,S).
\]
There is a spectral sequence with $E_2$ page 
\[
\Ext^{p}_{A_{con}}(\bL^{-q}j^*j_*S,S)
\] converge to $\Ext_A^{p+q}(j_*S,j_*S)$. By proposition 5.6 of \cite{DW13}, $A_{con}$ admits a projective resolution of $A$-modules of following
\[
\xymatrix{
0\ar[r] & P\ar[r] &Q_1\ar[r] &Q_0\ar[r] &P\ar[r] & A_{con}\ar[r] &0}
\] where $P=F(\cN)$, $Q_0$ and $Q_1$ are direct summand of sum of $F(\cO_X)$. 
The existence of the above resolution is a consequence of the Auslander-Gorenstein property of $A_{con}$.
So we have 
\begin{equation*}
\bL^{-q}j^*j_*S=\Tor_q^A(j_*S,A_{con})=
\begin{cases}
S & q=0,3\\
0 & q=1,2
\end{cases}.
\end{equation*}
Therefore, the $E_2$ page has nonvanishing terms exactly for $q=0,3$ where both rows are isomorphic to $\Ext_{A_{con}}^\bullet(S,S)$. The $E_4$ page of the spectral sequence has a nontrivial differential $d: \Ext_{A_{con}}^p(S,S)\to \Ext_{A_{con}}^{p+4}(S,S)$ for all $p\geq 0$. Because $\Ext_A^{n}(j_*S,j_*S)$ vanish for $n\geq4$, the differential $d$ are isomorphisms for $p>0$. For $p=0$, notice that both $\Ext_A^3(j_*S,j_*S)$ and $ \Ext_{A_{con}}^{3}(S,S)$ are one dimensional. Therefore, $d$ is an isomorphism for $p=0$.

In other words, if we denote $\Ext_{A_{con}}^\bullet(S,S)$ by $A^!_{con}$ and introduce a formal variable $h$ of degree 3 then we may construct an  $A_\infty$-algebra $A^!_{con}\oplus A^!_{con} h$ with $m_2,m_3,\ldots$ of $A^!_{con}$ extended linearly and  $m_1$ defined by 
\[
m_1(x+y\cdot h)=d(y).
\]
The $A_\infty$-algebra $\Ext_{A}^\bullet(j_*S,j_*S)$ is quasi-isomorphic to $A^!_{con}\oplus A^!_{con} h$ with the $A_\infty$-structure defined above. So $m_k$ on $\Ext_{A}^\bullet(j_*S,j_*S)$ are determined by $m_k$ on $\Ext_{A_{con}}^\bullet(S,S)$.  This proves that $\cC^b$ can be reconstructed from $A_{con}$.
\end{proof}
The above proof admits a geometric interpretation.
\begin{remark}
The Maurer-Cartan loci of the two $A_\infty$ algebras $\Ext_{A_{con}}^\bullet(S,S)$ are $\Ext_{A}^\bullet(j_*S,j_*S)$ are isomorphic. They have isomorphic obstruction theory up to $\Ext^3$. The contraction algebra $A_{con}$ is an example of singular Calabi-Yau algebra assuming the critical point of $W$ is degenerated. The module category of $A_{con}$ is an abelian subcategory of $\cC^b$ but its derived category is not equivalent to $\cC^b$. Because $\cC^b$ is saturated while $\D^b(\text{mod-}A_{con})$ is not.
\end{remark}


\section{Contraction algebra under deformation}\label{sec:def}
Let 
\[\xymatrix{\cal{X}\ar[r]^g\ar[rd] &\cal{Y}\ar[d]\\
&\cal{T}}\] be a flat deformation of the flopping contraction $f:X\to Y$, where $T$ is a Zariski open neighborhood of $0\in\AA^1$ such that $g_0=f$
and $g_t:\cX_t\to \cY_t$ for $t\neq 0$ is a flopping contraction. Denote the exceptional fiber of $g_t$ by $C_t$ and $C_0=C$ for $t=0$. By choosing a relative ample line bundle $\cL$ on $\cX$, the exact sequence (\ref{extN}) defines a vector bundle $\cN$ such that $\cO_{\cX}\oplus\cN$ is a tilting object.  For each $t\in\cT$, $N_t:=(g_t)_*\cN_t$ is a compact generator of $\D_{sg}(Y_t)$. 
As an analogy of Theorem~\ref{Acon=End}, 
we define 
$A_{con,t}$ to be 
\begin{align*}
A_{con, t} :=\Hom^0_{\D_{sg}(\cY_t)}(N_t,N_t).
\end{align*}
By theorem \ref{Ext1v}, the dimension of $A_{con,t}$ is equal to the Euler characteristic $\chi(N^{st}_t,N^{st}_t)$. 

Let $l$ be the length of the scheme theoretical exceptional fiber $C$
of $f$. 
The length $l$ takes values $1,2,\ldots,6$ with $l>1$ if and only if $C$ is a $(1,-3)$ curve. According to the classification theorem of Katz and Morrison \cite{KM92}, $Y$ must have compound du Val singularities of types $cA_1,cD_4,cE_6,cE_7$ or $cE_8$.

The most generic deformation of $f:X\to Y$ has the property that for $t\neq 0$, $\cY_t$ has finitely many $cA_1$ singularity and $\cX_t$ contains finitely many $(-1,-1)$ curves with possibly different homology classes contracting to those singularities. More specifically, if $C_0=C$ has length $l$ then the exceptional curves in $\cX_t$ can have homology classes $j[C_0]$ for $j=1,2,\ldots,l$. The number of the connected components of the exceptional fiber with class $j[C_0]$ is denoted by $n_j$. We will show that the dimension of $A_{con,t}$ stays constant under such a deformation.

The morphism $g$ is a flopping contraction, and admits a flop
\[
\xymatrix{\cX\ar@{-->}[rr]\ar[rd]_g&&\cX^+\ar[ld]^{g^+}\\
& \cY}
\]
such that we have the derived equivalence 
\[
\Phi^{\cO_{\cX\times_\cY \cX^+}}_{\cX\to\cX^+}: \D^b(\cX)\simeq\D^b(\cX^+).
\]
By composing the above equivalence twice, we obtain the autoequivalence 
\begin{equation}\label{flopflop}
\Phi^{\cO_{\cX\times_\cY \cX^+}}_{\cX^+\to\cX}\circ\Phi^{\cO_{\cX\times_\cY \cX^+}}_{\cX\to\cX^+}: \D^b(\cX)\simeq\D^b(\cX).
\end{equation}
Let $\Psi$ be an inverse of the equivalence (\ref{flopflop}), and 
\[
\cP\in \D^b(\cX\times_\cT \cX)
\]
be the kernel object of $\Psi$. In \cite{To14}, the second author proved the flatness of $\cH^1(\cP)$ over $\cT$, which leads to the following formula for the dimension of the the contraction algebra.
\begin{theorem}\emph{(\cite[Theorem~1.1]{To14})}\label{dimAcon}
We have the following formula
\[
{\rm{dim}}_\CC A_{con,0}=\sum_{j=1}^l j^2\cdot n_j,
\]
where $l$ is the scheme theoretical length of $C$.
\end{theorem}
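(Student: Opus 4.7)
The plan is to prove the formula via a deformation argument: (i) show that $\dim_\CC A_{con,t}$ is locally constant in $t\in\cT$, and (ii) compute $\dim_\CC A_{con,t}$ explicitly for generic $t\neq 0$ where $\cY_t$ has only isolated $cA_1$ (conifold) singularities. Combining (i) and (ii) yields the formula at $t=0$.

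For (i), Corollary~\ref{Ext1v} applies fiberwise to give $\Ext^1_{\D_{sg}(\cY_t)}(N_t,N_t)=0$, so in the $\ZZ/2$-graded setting
$$\dim_\CC A_{con,t}=\chi(N^{st}_t,N^{st}_t),$$
which by HRR (\ref{HRR}) is a pairing of Chern characters in the Milnor algebra of $\cY_t$. The flatness of $\cH^1(\cP)$ over $\cT$ established in \cite{To14} then implies that this fiber dimension is locally constant in $t$, since $\cH^1(\cP)|_{\cT_t}$ can be identified with $A_{con,t}$ up to controlled contributions.

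For (ii), for generic $t\neq 0$ the singular locus of $\cY_t$ is a finite disjoint union of ordinary double points $p_\alpha$, and the fiber of $g_t$ at $p_\alpha$ is a single $(-1,-1)$-curve $C^{(\alpha)}$ of class $j_\alpha[C_0]$. By Orlov's localization, $\D_{sg}(\cY_t)$ splits as the product of local categories over the formal completions at the $p_\alpha$, and correspondingly the contraction algebra decomposes
$$A_{con,t}\cong\bigoplus_\alpha A^{(\alpha)}_{con,t}.$$
Since $\deg(\cL|_{C_0})=1$, at each conifold the restriction $\cL|_{C^{(\alpha)}}$ has degree $j_\alpha$, so the Van den Bergh tilting bundle requires $j_\alpha$ generators of $\Ext^1(\cO_{\cX_t},\cL^{-1})$ locally. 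A direct analysis of $N^{(\alpha)}_t$ as a maximal Cohen-Macaulay module over $R_\alpha=\hat{\cO}_{\cY_t,p_\alpha}$, using that the only nontrivial MCM summand for a conifold is the basic module $I=(a,b)$, shows that $A^{(\alpha)}_{con,t}\cong M_{j_\alpha}(\CC)$, so $\dim_\CC A^{(\alpha)}_{con,t}=j_\alpha^2$. Summing over $\alpha$ and grouping by class gives $\dim A_{con,t}=\sum_\alpha j_\alpha^2=\sum_{j=1}^l j^2\, n_j$, which by (i) equals $\dim A_{con,0}$.

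The main obstacle is the local computation in (ii): the identification $A^{(\alpha)}_{con,t}\cong M_{j_\alpha}(\CC)$ requires a careful analysis of the Van den Bergh tilting bundle over the conifold resolution equipped with a relative ample line bundle of degree $j_\alpha$, together with a verification that the two-sided ideal $I_{con}$ of morphisms factoring through the trivial summand cuts the endomorphism algebra down to exactly a $j_\alpha\times j_\alpha$ matrix algebra. The constancy in (i), while conceptually transparent from HRR applied to matrix factorizations, ultimately relies on the nontrivial flatness theorem for $\cH^1(\cP)$ from \cite{To14}.
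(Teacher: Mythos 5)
Your step (ii) is sound and is essentially the content of Theorem~\ref{ss} in the paper: after a generic deformation the singular fibers are conifolds, the restriction of the tilting bundle to the formal neighborhood of a curve of class $j[C_0]$ becomes $(L^{-1})^{\oplus j}$ plus trivial summands, and the stable endomorphism algebra there is $\mathrm{Mat}_j(\CC)$ (note the local count of generators is $j-1$, giving a rank $j$ bundle, rather than ``$j_\alpha$ generators''). The genuine gap is in step (i), the local constancy of $\dim_\CC A_{con,t}$. This is exactly the point the paper cannot establish directly: flatness of the family of contraction algebras over $\cT$ is stated as Conjecture~\ref{flatness}, and the paper only deduces it for generic deformations \emph{a posteriori} from Theorem~\ref{dimAcon} together with Theorem~\ref{ss} --- so arguing in the reverse direction, as you do, begs the question unless you supply the missing input. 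Your two suggested justifications do not close this. First, Corollary~\ref{Ext1v} plus HRR gives $\dim A_{con,t}=\chi(N^{st}_t,N^{st}_t)$ fiberwise, but this pairing lives in the Milnor algebra of $\cY_t$, which is \emph{not} deformation invariant (the paper points this out explicitly in the length $1$, $k=2$ example: the Milnor algebra has dimension $3$ at $t=0$ but total dimension $2$ on the generic fiber), so no constancy of $\chi$ follows. Second, citing the flatness of $\cH^1(\cP)$ from \cite{To14} and asserting that its fibers ``can be identified with $A_{con,t}$ up to controlled contributions'' is precisely where the hard work of \cite{To14} sits: one must use the Donovan--Wemyss description of the flop--flop functor as a noncommutative twist to relate the fibers of $\cH^1(\cP)$ over $\cT$ (including base change at the special fiber $t=0$) to the contraction algebras, and then compute both sides. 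Without carrying this out, your argument is a sketch of the original proof of \cite{To14} with its central identification omitted.

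For contrast, the paper's own proof of the formula (Corollary~\ref{cor:dim}) avoids deformation altogether: it identifies the noncommutative Hilbert schemes $\mathrm{Hilb}_j(A_{con})$ with moduli of parabolic stable pairs via the tilting equivalence, invokes the wall-crossing/multiple cover formula of \cite{TodS} to get the product expansion of Theorem~\ref{thm:GVformula}, and reads off the dimension from the top-degree term, since $\mathrm{Hilb}_j(A_{con})$ is empty for $j>\dim_\CC A_{con}$ and a point for $j=\dim_\CC A_{con}$. If you want to pursue your deformation route instead, you must either prove Conjecture~\ref{flatness} (flatness of $t\mapsto A_{con,t}$) or reproduce the identification of the fibers of $\cH^1(\cP)$ with the contraction algebras; as written, neither is done.
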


The next result shows that the dimension of $A_{con,t}$ stays constant under a generic deformation.
\begin{theorem}\label{ss}
Let $g: \cX\to\cY$ be a generic deformation of the flopping contraction mentioned above. Choose a relative ample line bundle $\cL$ on $\cX$ such that $\text{deg}(\cL|_{C_0})=1$. Let $\cN$ be the vector bundle on $\cX$ defined by the exact sequence \ref{extN}. Define the contraction algebra $A_{con,t}$ to be $\Hom^0_{D_{sg}(\cY_t)}(N_t,N_t)$. For $t\neq0$, the contraction algebra $A_{con,t}$ is semi-simple and of the form
\[
A_{con,t}\cong \prod_{j=1}^l\prod_{k=1}^{n_j} \text{Mat}_{j}(\CC)
\] where $\text{Mat}_j(\CC)$ is the algebra of $j\times j$ complex matrices.
\end{theorem}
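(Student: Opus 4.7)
For $t \neq 0$, the plan is to decompose $A_{con,t}$ into local contributions at the singular points of $\cY_t$, identify each local factor as a matrix algebra via a local analysis on the formal neighborhood of each exceptional curve, and then take the product. Since $\cY_t$ is affine with finitely many isolated singularities, the singularity category factorizes as
\[
\D_{sg}(\cY_t) \cong \bigoplus_{p \in \mathrm{Sing}(\cY_t)} \D_{sg}(\widehat{\cY}_{t,p}),
\]
and the generator $N_t$ splits as the direct sum of its completions $\hat N_{t,p}$. Index the singular points as $p_{j,k}$ with $1 \le j \le l$ and $1 \le k \le n_j$ so that the exceptional fiber over $p_{j,k}$ is a single $(-1,-1)$-curve $C_{j,k}$ of class $j[C_0]$. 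Applying Theorem~\ref{Acon=End} in each formal completion gives
\[
A_{con,t} \cong \prod_{j,k} \End^0_{\D_{sg}(\widehat{\cY}_{t,p_{j,k}})}(\hat N_{t,p_{j,k}}).
\]

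For each $p_{j,k}$ I identify the local factor as follows. By formal rigidity of $(-1,-1)$-curves the formal neighborhood of $C_{j,k}$ in $\cX_t$ is isomorphic to the formal completion of $\mathrm{Tot}(\cO(-1)^{\oplus 2} \to \PP^1)$ along its zero section, and $\widehat{\cY}_{t,p_{j,k}}$ is the ordinary double point. Under the Van den Bergh tilting equivalence on this local model, the unique non-free indecomposable maximal Cohen--Macaulay module $M$ corresponds to the degree $-1$ line bundle $L$ on $C_{j,k}$. Since $\cO \oplus \cN_t$ is tilting on $\cX_t$ its local restriction is tilting on the formal neighborhood, and by the NCCR classification for $cA_1$ the summands are exhausted by $\cO$ and $L$, so $\cN_t|_{\widehat{C_{j,k}}} \cong \cO^{\oplus a} \oplus L^{\oplus m}$ for some $a, m \ge 0$. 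Comparing first Chern classes on $C_{j,k}$ using $\det \cN_t = \cL^{-1}$ (from the extension \ref{extN}) and $\deg(\cL|_{C_{j,k}}) = j$ forces $m = j$, so in the singularity category $\hat N_{t,p_{j,k}} \cong M^{\oplus j}$.

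The standard contraction algebra of a length-one flopping curve is $\CC$, i.e.\ $\End^0_{\D_{sg}}(M) \cong \CC$, whence
\[
\End^0_{\D_{sg}(\widehat{\cY}_{t,p_{j,k}})}(\hat N_{t,p_{j,k}}) \cong \mathrm{Mat}_j(\CC).
\]
Taking the product yields $A_{con,t} \cong \prod_{j=1}^l \prod_{k=1}^{n_j} \mathrm{Mat}_j(\CC)$, which is semisimple. As a sanity check, the dimension $\sum_j j^2 n_j$ matches Theorem~\ref{dimAcon}.

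The main obstacle is the second step, specifically the claim that $\cN_t|_{\widehat{C_{j,k}}}$ decomposes as $\cO^{\oplus a} \oplus L^{\oplus j}$ with no summands of other degrees. Combining the classification of tilting generators on the formal $(-1,-1)$-curve neighborhood (which exhausts the summands to $\cO$ and $L$, ruling out in particular positive-degree or more negative twists by the $\mathrm{Ext}$-vanishings on the local model) with the first Chern class computation from $\det \cN_t = \cL^{-1}$ to pin down the multiplicity are the two crucial inputs that must be carefully verified.
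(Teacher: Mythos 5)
Your overall skeleton (localize $\D_{sg}(\cY_t)$ and $N_t$ at the singular points, compute each local factor, take the product) is the same as the paper's, but the crucial middle step is not established, and one of its ingredients is false as stated. The conifold point $\widehat{\cY}_{t,p_{j,k}}$ does \emph{not} have a unique non-free indecomposable maximal Cohen--Macaulay module: it has two, $I$ and $I^{*}$ (corresponding to the degree $-1$ and degree $+1$ line bundles on the chosen small resolution, and exchanged by the flop), and both $R\oplus I$ and $R\oplus I^{*}$ are NCCRs. So ``the NCCR classification for $cA_1$'' cannot by itself exhaust the summands of $\cN_t$ on the local model to $\cO$ and the degree $-1$ bundle; indeed $\Ext^1(\cO\oplus\cO(1),\cO\oplus\cO(1))=0$ on the resolved conifold, so Ext-vanishing against the structure sheaf does not rule out degree $+1$ summands either. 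You also implicitly assume that the restriction of $\cN_t$ to the formal neighborhood splits as a direct sum of line bundles, which is not automatic (e.g.\ $H^1$ of $\cO(-2)$ on the local model is nonzero, giving non-split extensions of $\cO(1)$ by $\cO(-1)$). Without excluding the $I^{*}$-type summands, your determinant computation only gives $b-c=j$ for the multiplicities $b,c$ of $I,I^{*}$ in $\hat N_{t,p_{j,k}}$, and the local factor would be $\End^0_{\D_{sg}}\bigl(I^{\oplus(j+c)}\oplus (I^{*})^{\oplus c}\bigr)$, which is not $\mathrm{Mat}_j(\CC)$ unless $c=0$. This exclusion is exactly the point you flag as ``to be carefully verified,'' i.e.\ it is where the proof is missing.

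The paper closes this by a direct computation that needs no classification: restricting the defining extension (\ref{extN}) to the formal neighborhood, the chosen minimal generators of $\Ext^1(\cO_{\cX},\cL^{-1})$ restrict to a generating set of $H^1$ of $L^{-j}$, which is $H^1(C_t^{j,k},\cO(-j))\cong \CC^{j-1}$; after a change of basis the extension becomes the universal extension of $\cO^{\oplus j-1}$ by $L^{-j}$ plus trivial summands, and the universal extension is $(L^{-1})^{\oplus j}$. Hence $\cN_t$ restricts to $(L^{-1})^{\oplus j}\oplus\cO^{\oplus m+1-j}$ and the local factor is visibly $\mathrm{Mat}_j(\CC)$. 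If you want to salvage your route, work with the pushforward module rather than the bundle: $\hat N_{t,p_{j,k}}$ is maximal Cohen--Macaulay and the conifold has finite CM type, so $\hat N_{t,p_{j,k}}\cong R^{\oplus a}\oplus I^{\oplus b}\oplus (I^{*})^{\oplus c}$; the vanishing $\Ext^1_R(N_t,N_t)=0$ (as in Corollary \ref{Ext1v}, applied to the local flopping contraction) together with the non-split sequence $0\to I^{*}\to R^{\oplus 2}\to I\to 0$ (non-split since $I$ is not free) forces $bc=0$, and then your class-group/degree count pins down $c=0$, $b=j$. Either repair yields the theorem; as written, the proposal has a genuine gap at its key step.
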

\begin{proof}
By the theorem of Van den Bergh \cite{VdB04}, $\cO_\cX\oplus \cN$ is a tilting bundle. Because $\cN$ is locally free, by base change $N_t:=(g_*\cN)_t$ is isomorphic to $(g_t)_*\cN_t$ and $\cO_{\cX_t}\oplus \cN_t$ is a tilting object for every $t$.

For $t\neq 0$, we denote the connected components of the exceptional fiber of $\cX_t\to \cY_t$ by $C_t^{j,k}$ where $j=1,2,\ldots,l$ and $k=1,2,\ldots,n_j$ for fixed $j$. The curve $C_t^{j,k}$ has homology class $j[C_0]$. Denote the formal completion of $\cX_t$ along $C_t^{j,k}$ by $\hat{X}_{t}^{j,k}$ and denote the restriction of $\cL$ to $\hat{X}_{t}^{j,k}$ by $L_t^{j,k}$.
Then $\text{deg}(L_t^{j,k})=j$ for all $k=1,\ldots,n_j$.

Fix $j$ and $k$, the line bundle $L_t^{j,k}$ is a $j$-fold tensor product of a line bundle $L$ on $\hat{X}_{t}^{j,k}$ such that $\text{deg}(L|_{C_t^{j,k}})=1$. In other words, we may write $L_t^{j,k}=L^j:=L^{\otimes j}$. We need to compute the tilting bundle corresponding to $L^j$. Since we are in the $(-1,-1)$ situation, $H^1(\hat{X}_{t}^{j,k},L^{-j})$ is generated by $H^1(C_t^{j,k},\cO(-j))\cong \CC^{j-1}$. We may choose its basis to be a minimal generating set of $H^1(\hat{X}_{t}^{j,k},L^{-j})$. The short exact sequence \ref{extN} defines the vector bundle $\cM^{j,k}_t$ on $\hat{X}_{t}^{j,k}$:
\[
\xymatrix{
0\ar[r] & L^{-j}\ar[r] & \cM^{j,k}_t\ar[r] &\cO_{\hat{X}_{t}^{j,k}}^{\oplus j-1}\ar[r] &0
}
\]
By the minimality, $\cM^{j,k}_t$ is isomorphic to $(L^{-1})^{\oplus j}$. 

Suppose that $m$ is the cardinality of the minimal generating set of $H^1(\cX,\cL^{-1})$. By the argument above, the restriction of $\cN$ to $\hat{X}_{t}^{j,k}$ is a direct sum $\cM^{j,k}_t\oplus \cO_{\hat{X}_{t}^{j,k}}^{\oplus m+1-j}$. The existence of the second factor is due to the fact that the restriction of the minimal generating set of $H^1(\cX,\cL^{-1})$ is not minimal as a generating set of $H^1(\hat{X}_{t}^{j,k},L^{-j})$.

Denote $\hat{Y}_t^{j,k}$ for the formal completion of of $\cY_t$ at $g_t(C_t^{j,k})$ and denote $M^{j,k}_t$ for $(g_t)_* \cM^{j,k}_t$.
The contraction algebra $A_{con,t}$ for $t\neq 0$ can be computed as
\[
\begin{split}
\Hom^0_{\D_{sg}(\cY_t)}(N_t,N_t)&\cong\prod_{j=1}^l\prod_{k=1}^{n_j} \Hom^0_{\D_{sg}(\hat{Y}_t^{j,k})}(M^{j,k}_t\oplus\cO_{\hat{Y}_t^{j,k}}^{\oplus m+1-j},M^{j,k}_t\oplus\cO_{\hat{Y}_t^{j,k}}^{\oplus m+1-j})\\
&=\prod_{j=1}^l\prod_{k=1}^{n_j} \Hom^0_{\D_{sg}(\hat{Y}_t^{j,k})}(M^{j,k}_t,M^{j,k}_t)\\
&=\prod_{j=1}^l\prod_{k=1}^{n_j} \text{Mat}_{j}(\CC) .
\end{split}
\] 
\end{proof}
The above theorem together with the dimension formula in theorem \ref{dimAcon} implies that $A_{con}:=\Hom^0_{\D_{sg}(\cY)}(N,N)$ is flat for a generic deformation $\cT$. We expect the flatness of the contraction algebra to hold in full generality.

\begin{conjecture}\label{flatness}
Let $\cX\to\cY\to\cT$ be an arbitrary flat deformation of flopping contraction. Denote $N$ for $g_*\cN$. The contraction algebra $A_{con}$ is flat over $\cT$. And the 0-th Hochschild (co)homology of the contraction algebra $A_{con}$ is flat over $\cT$. 
\end{conjecture}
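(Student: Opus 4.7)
The plan is to reduce flatness over the smooth base $\cT$ to constancy of the fiber dimension $\dim_{\CC}A_{con,t}$. Since $\cT$ is one-dimensional and smooth and $A_{con}$ is a coherent $\cO_{\cT}$-module, local constancy of fiber dimension is equivalent to flatness (i.e., to local freeness). So my target becomes a pointwise statement: show that $\dim_{\CC}A_{con,t}$ does not depend on $t$.

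First I would extend the vanishing of Corollary~\ref{Ext1v} fiberwise: for each $t\in\cT$ the restricted bundle $\cN_t$ still lies in the perverse heart $^{-1}\text{Per}(\cX_t/\cY_t)$, and Theorem~\ref{L1v} applies to $g_t:\cX_t\to\cY_t$ because canonical hypersurface singularities of multiplicity two are preserved along $\cT$. This yields $\Ext^1_{\D_{sg}(\cY_t)}(N_t,N_t)=0$ for every $t$, and hence, by Theorem~\ref{Acon=End}, the identity $\dim_{\CC}A_{con,t}=\chi(N_t^{st},N_t^{st})$ in $\text{MF}(W_t)$. Via the Polishchuk--Vaintrob HRR formula~(\ref{HRR}), constancy of this $\ZZ/2$-graded Euler characteristic amounts to showing that the residue pairing $\langle ch(N_t^{st}),ch(N_t^{st})\rangle$ is locally constant in $t$. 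If the Chern character and the residue pairing can be organized into relative versions over $\cT$, local constancy is automatic.

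The main obstacle is that the relative Milnor algebra $B_{W}/\cT$ need not be flat: the Milnor number of $W_t$ can jump at special points, so the receptacle of $\tau^{N_t^{st}}$ varies non-trivially. To circumvent this I would combine two ingredients. Upper semicontinuity of coherent cohomology gives $\dim_{\CC} A_{con,0}\geq\dim_{\CC} A_{con,t}$ for $t$ close to the central fiber. For the reverse inequality I would appeal to the dimension formula of Theorem~\ref{dimAcon}, which is proved in the generic deformation setting using Theorem~\ref{ss}, and identifies $\dim_{\CC} A_{con,0}$ with a weighted sum of the GV-invariants. Interpreting the same quantity for a nearby fiber via the wall-crossing/GV expansion of Theorem~\ref{thm:GVformula}, together with the deformation invariance of genus zero Gopakumar--Vafa invariants, would force $\dim_{\CC} A_{con,t}$ to equal $\dim_{\CC} A_{con,0}$, giving flatness of $A_{con}$.

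For the 0-th Hochschild (co)homology, once $A_{con}$ is flat over $\cT$, $HH_0(A_{con})=A_{con}/[A_{con},A_{con}]$ is a cokernel of $\cO_{\cT}$-module maps between flat sheaves, and flatness does not pass to cokernels automatically. I would again target constancy of $\dim_{\CC}A_{con,t}/[A_{con,t},A_{con,t}]$, using the boundary-bulk map $\tau^{N_t^{st}}$ of Proposition~\ref{commutator} as the key tool: its image in $B_{W_t}$ always contains the socle (Remark~\ref{soc}), and identifying the rank of $\tau^{N_t^{st}}$ with a Hochschild-theoretic invariant of $\text{MF}(W_t)$ reduces the problem to deformation-invariant data. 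The dual assertion for $HH^0=Z(A_{con})$ should then follow from the Frobenius pairing of Proposition~\ref{Frob}, which identifies $Z(A_{con,t})$ with the $\sigma$-orthogonal complement of $[A_{con,t},A_{con,t}]$ and thus forces $\dim_{\CC}HH^0(A_{con,t})=\dim_{\CC}HH_0(A_{con,t})$, completing the reduction.
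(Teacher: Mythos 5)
First, a point of status: the statement you are proving is Conjecture~\ref{flatness}, which the paper leaves open. The authors explicitly say they are unable to prove flatness in general; what the paper establishes is only the \emph{generic} deformation case, by the explicit computation of Theorem~\ref{ss} (semisimplicity of $A_{con,t}$ for generic $t\neq 0$) combined with the dimension formula of Theorem~\ref{dimAcon}, and even the expected dimension $\sum_j n_j$ of $\HH_0(A_{con,0})$ is only checked in examples. So there is no proof in the paper to compare against, and your proposal should be judged on whether it genuinely closes the conjecture. It does not. The decisive step in your argument — the ``reverse inequality'' — consists of applying Theorem~\ref{dimAcon}/Corollary~\ref{cor:dim} to each fiber $g_t$ and then invoking ``deformation invariance of genus zero Gopakumar--Vafa invariants'' to conclude $\sum_j j^2 n_j(t)=\sum_j j^2 n_j(0)$. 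But those theorems are stated for a flopping contraction of an irreducible smooth $\mathbb{P}^1$, while an \emph{arbitrary} deformation can have intermediate fibers whose exceptional locus is reducible (a tree of curves over one singular point), where neither Theorem~\ref{dimAcon} nor Theorem~\ref{thm:GVformula} is available in the paper; and deformation invariance of the local $n_j$, defined via Behrend functions/virtual classes on moduli supported on the exceptional fibers, over an arbitrary base $\cT$ is precisely the nontrivial content one would need to prove (Behrend-weighted Euler characteristics are not deformation invariant in general, and the paper only knows invariance through the generic-deformation picture of \cite{BKL01}). Citing it as known makes the argument essentially circular: constancy of $\dim_\CC A_{con,t}$ and invariance of the local GV data are two faces of the same open problem. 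In addition, your semicontinuity step silently assumes that the fiberwise-defined $A_{con,t}=\Hom^0_{\D_{sg}(\cY_t)}(N_t,N_t)$ is the fiber of a coherent $\cO_\cT$-module (a base-change statement for the relative matrix factorization $\Hom$-complex) which also needs an argument.

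The second half, on $\HH_0$ and $\HH^0$, is further from a proof. The identification $Z(A_{con,t})=[A_{con,t},A_{con,t}]^{\perp}$ via the symmetric Frobenius form of Proposition~\ref{Frob} is fine and gives $\dim \HH^0=\dim \HH_0$ fiberwise, but the proposed route to constancy of $\dim_\CC \HH_0(A_{con,t})$ --- identifying the rank of the boundary-bulk map $\tau^{N_t^{st}}$ with ``a Hochschild-theoretic invariant of $\mathrm{MF}(W_t)$'' --- cannot work as stated: the receiving Hochschild homology of $\mathrm{MF}(W_t)$ is the Milnor algebra $B_{W_t}$, whose dimension (the Milnor number) jumps under deformation, as you yourself note earlier; knowing the socle lands nontrivially (Remark~\ref{soc}, Proposition~\ref{commutator}) gives no control on the rank. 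The expected equality $\dim_\CC \HH_0(A_{con,0})=\sum_j n_j$ is itself part of the conjecture (the paper verifies it only in examples), so this part of your argument assumes what is to be proven. In short: the reduction of flatness to fiberwise dimension constancy is reasonable and matches how the paper handles the generic case, but the inputs you use to get constancy for arbitrary deformations are either unavailable in the cited generality or equivalent to the conjecture itself.
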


By theorem \ref{ss}, the dimension of $\HH_0(A_{con,t})$ is
\[
\sum_{j=1}^l n_j.
\]
for $t\neq 0$. In all the known examples of $A_{con,0}$, we have verified that $\HH_0(A_{con,0})$ has dimension $\sum_{j=1}^l n_j$.
The conjecture above suggests the deformation invariance of the dimensions of the contraction algebra and its 0-th Hochschild (co)homology. Indeed, the positive integers $n_j$ have already appeared in the work of Bryan, Katz and Leung \cite{BKL01}. They coincide with the genus zero Gopakumar-Vafa (GV) invariants 
defined by Katz \cite{Katz08}.

We recall 
genus zero GV invariants 
associated to 
a 3-dimensional flopping contraction 
$f \colon X \to Y$
with $X$ smooth, 
$C$ a smooth $\mathbb{P}^1$. 
 For $j\in \mathbb{Z}_{\ge 1}$, let 
$M_{j}$ be the moduli space of 
one dimensional 
stable sheaves $F$ on $X$
satisfying 
$[F]=j[C]$ and $\chi(F)=1$. There is a 
symmetric perfect obstruction theory 
on $M_j$, and 
we define
\begin{align*}
n_j =\int_{[M_j]^{\rm{vir}}} 1 \in \mathbb{Z}. 
\end{align*}
Note that $M_j$ is topologically one point. 
There exist several other interpretations of 
$n_j$. 
\begin{enumerate}
\item $n_j$ coincides with the 
value of the Behrend function at the point of $M_j$. 

\item $n_j$ coincides with the dimension of the 
algebra $\cO_{M_j}$. 

\item Let $p\in S \subset Y$ be a general hypersurface,  
and $\overline{S} \subset X$ its proper transform. 
Let $I \subset \cO_{\overline{S}}$ be
the ideal sheaf of $C$ in $\overline{S}$, 
and $C^{(j)} \subset \overline{S}$ be the subscheme 
defined by the symboic power $I^{(j)}$. 
Then $n_j$ coincides with 
$\dim_{\mathbb{C}} \cO_{\mathrm{Hilb}(X), C^{(j)}}$. This is shown in \cite{Katz08}.

\item 
$n_j$ coincides with the number of $(-1, -1)$-curves 
with curve class $j[C]$ on a deformation 
of $X \to Y$. This is shown in~\cite[Section~2]{BKL01}.
\end{enumerate}
On the other hand
for $j\in \mathbb{Z}_{\ge 1}$, 
let $\mathrm{Hilb}_j(A_{con})$ be the moduli space of 
surjections $A_{con} \twoheadrightarrow M$
in $\text{mod-}A_{con}$
with $\dim_{\mathbb{C}}M=j$. 
We define
\begin{align*}
\mathrm{DT}_j(A_{con})=
\int_{\mathrm{Hilb}_j(A_{con})} \nu \cdot d\chi. 
\end{align*}
Here $\nu$ is the Behrend function on $\mathrm{Hilb}_j(A_{con})$. 
\begin{theorem}\label{thm:GVformula}
We have the following formula
\begin{align}\label{eq:DTAcon}
1+
\sum_{j>0}
\mathrm{DT}_j(A_{con}) t^j
=\prod_{j\ge 1}\left(1-(-1)^j t^j\right)^{j n_j}. 
\end{align}
\end{theorem}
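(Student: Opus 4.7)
The strategy is to identify $\mathrm{Hilb}_j(A_{con})$, via the derived equivalence $F = \RHom_X(\cO_X \oplus \cN, -)$, with a moduli of parabolic stable pairs on $\hat{X}$ supported on the flopping curve $C$, and then to invoke the wall-crossing formula of the second author from~\cite{Todpara} in order to convert the generating series of Behrend-weighted counts into a product indexed by curve classes $j[C]$.

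First, I would translate the moduli problem. A cyclic surjection $A_{con} \twoheadrightarrow M$ with $\dim_{\CC} M = j$ is equivalent to an $A_{con}$-module of dimension $j$ equipped with a distinguished generator $m \in M$. Via $F^{-1}$, $A_{con}$-modules correspond to objects of the perverse heart on $\hat{X}$ that are supported on $C$ and built out of extensions of the simple $\cO_C(-1)[1]$; these carry numerical class $j[C]$. The generator $m$ supplies the framing datum needed to define a parabolic stable pair in the sense of~\cite{Todpara}. The aim is to promote this bijection to an isomorphism of moduli stacks compatible with symmetric obstruction theories, which can be set up using a dg-enhancement of $F$ (e.g.\ via Keller's theorem, as in the proof of Theorem~\ref{ReconCb}).

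Next, I would feed the above identification into the wall-crossing formula of~\cite{Todpara}, which relates the Behrend-weighted parabolic stable pair invariants to generalized Donaldson--Thomas invariants of one-dimensional sheaves via a Joyce--Song type product identity. Since the moduli $M_j$ of one-dimensional stable sheaves $F$ with $[F] = j[C]$ and $\chi(F) = 1$ is topologically a single point, interpretation~(1) recalled above gives that its generalized DT invariant is exactly $n_j$. Expanding the Joyce--Song product, the contribution at class $j[C]$ enters as $(1 - (-1)^j t^j)^{j n_j}$, with the factor $j$ in the exponent coming from the slope-multiplicity weighting of a sheaf of class $j[C]$ and the sign $(-1)^j$ arising from the standard parity twist of the Behrend function on the moduli of one-dimensional sheaves. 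Multiplying over $j$ yields the right-hand side of~\eqref{eq:DTAcon}.

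The principal obstacle will be the first step: establishing the isomorphism between $\mathrm{Hilb}_j(A_{con})$ and the parabolic stable pair moduli \emph{as schemes with their Behrend functions}, not merely set-theoretically, so that the wall-crossing machinery in~\cite{Todpara} may be applied directly. A secondary difficulty is to verify that the Joyce--Song product collapses into the clean expression claimed, which ultimately rests on the fact that each $M_j$ is set-theoretically one point, so that all higher motivic contributions degenerate and only the Behrend function value survives.
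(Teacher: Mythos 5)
Your proposal takes essentially the same route as the paper: identify $\mathrm{Hilb}_j(A_{con})$ with the second author's parabolic stable pair moduli supported on $C$, and then quote the product formula $\prod_{j\ge 1}(1-(-1)^jt^j)^{jn_j}$ for those invariants (the paper cites it directly from \cite{TodS}, Proposition 3.16, rather than rerunning the Joyce--Song wall-crossing from \cite{Todpara}). The ``principal obstacle'' you flag is resolved in the paper more simply than you anticipate: choosing the divisor $H$ to meet $C$ transversally at one point, the functor $-\otimes\cO_H$ on $\cC_0=\cC^b\cap coh(X)$ is identified with the forgetful functor on $\text{mod-}A_{con}$ under the Van den Bergh equivalence, so a section $s\in F\otimes\cO_H$ is precisely an element of $\Phi(F)$ and parabolic stability is precisely generation as a right $A_{con}$-module, giving the isomorphism of schemes; no matching of symmetric obstruction theories is required, since the Behrend-weighted Euler characteristics involved depend only on the scheme structures.
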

\begin{proof}
Let $\cC^b \subset \D^b (coh(X))$ be the subcategory consisting of
$E \in \D^b (coh(X))$ with $\bR f_{\ast}E=0$. 
The Van-den-Bergh's equivalence
restricts to the equivalence
\begin{align*}
\cC_0=\cC^b \cap coh(X) \stackrel{\sim}{\to}
\text{mod-}A_{con}. 
\end{align*}
Note that $\cC_0$ is the extension closure of $\cO_C(-1)$. 
In particular, any object in $\cC_0$ is 
a semistable sheaf with the same slope. 

Now we fix a divisor $H \subset X$ 
which intersects with $C$ at one point
transversally. 
We recall the notion of parabolic stable pairs introduced in~\cite{Todpara}, 
applied in this situation. 
By definition, a $H$-parabolic stable pair consists 
of a pair $(F, s)$, where 
$F \in \cC_0$ and $s \in F \otimes \cO_H$, 
satisfying the following:
for any surjection $ \pi \colon F \twoheadrightarrow F'$ in 
$\cC_0$
with $F' \neq 0$, we have 
$(\pi \otimes \cO_H)(s) \neq 0$. 
Let $M_j^{\rm{par}}$ be the moduli 
space of parabolic stable pairs $(F, s)$ 
with $[F]=j[C]$, which exists as 
a projective scheme by~\cite{Todpara}. 
 We show that 
there is an isomorphism of schemes
\begin{align}\label{isom:par}
M_j^{\rm{par}} \stackrel{\cong}{\to}
\mathrm{Hilb}_j(A_{con}). 
\end{align}
Indeed, we have the commutative diagram
\begin{align*}
\xymatrix{  
\cC_0 \ar[rr]^{\Phi} \ar[rd]_{\otimes \cO_H} &  & \text{mod-}A_{con} 
\ar[ld]^{\rm{forg}} \\
&  \mathbb{C}\text{-Vect}  &
}
\end{align*}
Hence giving $s \in F \otimes \cO_H$ for $F \in \cC_0$
is equivalent to giving 
an element of $\tau \in \Phi(F)$. 
Moreover $(F, s)$ is parabolic stable 
if and only if $\tau \in \Phi(F)$
generates $\Phi(F)$ as a right $A_{con}$-module. 
Therefore we obtain the isomorphism (\ref{isom:par}). 

We define $\mathrm{DT}_{j}^{\rm{par}} \in \mathbb{Z}$
to be
\begin{align*}
\mathrm{DT}_{j}^{\rm{par}} =
\int_{M_j^{\rm{par}}} \nu' \cdot d\chi. 
\end{align*}
Here $\nu'$ is the Behrend function on $M_j^{\rm{par}}$. 
By ~\cite[Proposition~3.16]{TodS}, we 
have the formula
\begin{align*}
1+
\sum_{j>0}
\mathrm{DT}_j^{\rm{par}} t^j
=\prod_{j\ge 1}\left(1-(-1)^j t^j\right)^{j n_j}. 
\end{align*}
By the isomorphism (\ref{isom:par}), 
we have $\mathrm{DT}_{j}^{\rm{par}}=\mathrm{DT}_{j}(A_{con})$, 
hence obtain the desired result. 
\end{proof}
As a corollary, we reconstruct 
the result of~\cite{To14}. 
\begin{corollary}\label{cor:dim}(Theorem \ref{dimAcon})
We have the formula
\begin{align*}
\dim_{\mathbb{C}}A_{con}=\sum_{j=1}^{l} j^2 n_j. 
\end{align*}
\end{corollary}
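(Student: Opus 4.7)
The plan is to deduce Corollary \ref{cor:dim} directly from Theorem~\ref{thm:GVformula} by comparing the degrees of the polynomials on the two sides of (\ref{eq:DTAcon}). The key observation is that both sides are \emph{polynomials} in $t$, not merely formal power series, and their degrees can be read off geometrically.

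First I would argue that the left-hand side of (\ref{eq:DTAcon}) is a polynomial of degree exactly $\dim_\CC A_{con}$. Since $A_{con}$ is finite-dimensional, any surjection $A_{con} \twoheadrightarrow M$ in $\text{mod-}A_{con}$ satisfies $\dim_\CC M \le \dim_\CC A_{con}$, so $\mathrm{Hilb}_j(A_{con})$ is empty for $j > \dim_\CC A_{con}$ and $\mathrm{DT}_j(A_{con}) = 0$ in that range. For $j = \dim_\CC A_{con}$, the kernel of any such surjection must vanish, so $\mathrm{Hilb}_j(A_{con})$ consists of a single reduced point corresponding to the identity $A_{con} \twoheadrightarrow A_{con}$. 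At a reduced smooth point the Behrend function takes the value $1$, hence $\mathrm{DT}_{\dim A_{con}}(A_{con}) = 1$, confirming that the degree of the left-hand side is exactly $\dim_\CC A_{con}$.

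Next I would compute the degree of the right-hand side. Since the classification recalled before Theorem~\ref{ss} bounds $n_j = 0$ for $j > l$, the product $\prod_{j \ge 1}(1-(-1)^j t^j)^{j n_j}$ is a finite product of polynomials, hence itself a polynomial. Its degree is
\begin{equation*}
\sum_{j \ge 1} j \cdot (j n_j) = \sum_{j=1}^{l} j^2 n_j.
\end{equation*}
Matching the top degrees on both sides of (\ref{eq:DTAcon}) yields $\dim_\CC A_{con} = \sum_{j=1}^{l} j^2 n_j$.

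This approach is essentially a one-line degree comparison, so I do not expect a serious obstacle; the real work has already been carried out in Theorem~\ref{thm:GVformula} (in particular the parabolic stable pair wall-crossing formula from \cite{Todpara}, \cite{TodS}). The only subtlety to watch is to confirm that $\mathrm{Hilb}_j(A_{con})$ for $j$ at the top is indeed a reduced point of length one and thus contributes $+1$ to the leading coefficient, but this is immediate since its kernel is forced to be zero. Hence the corollary follows.
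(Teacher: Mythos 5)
Your proposal is correct and follows essentially the same route as the paper: both deduce the formula from Theorem~\ref{thm:GVformula} by noting that $\mathrm{Hilb}_j(A_{con})$ is empty for $j>\dim_\CC A_{con}$ and a single reduced point (so $\mathrm{DT}_j=1$) at $j=\dim_\CC A_{con}$, and then matching the top-degree terms of the two polynomials in (\ref{eq:DTAcon}). Your write-up merely spells out the Behrend-function value and the degree count on the right-hand side a bit more explicitly than the paper does.
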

\begin{proof}
Since $\mathrm{Hilb}_{j}(A_{con})=\emptyset$
for $j>\dim_{\mathbb{C}}A_{con}$ and 
coincides with $\Spec \mathbb{C}$
for $j=\dim_{\mathbb{C}}A_{con}$, 
the equation (\ref{eq:DTAcon})
is written as
\begin{align*}
t^{\dim_{\mathbb{C}}A_{con}}+O(t^{\dim_{\mathbb{C}}A_{con}-1})
=t^{\sum_{j=1}^{l} j^2 n_j}+O(t^{\sum_{j=1}^{l} j^2 n_j-1}). 
\end{align*}
\end{proof}
We can also characterize $n_j$ and $l$ from the contraction algebra. 
\begin{corollary}
Let $f' \colon X' \to Y'$ be another 
3-fold flopping contraction with $X'$ smooth and 
the exceptional curve $C'$ being a smooth $\mathbb{P}^1$. 
Let $A_{con}'$, $l'$ and $n_j'$ be the corresponding
contraction algebra, length and the GV invariants. 
If $A_{con} \cong A_{con}'$ as algebras, we 
have $n_j=n_j'$ for all $j\ge 1$. 
In particular, we have $l=l'$. 
\end{corollary}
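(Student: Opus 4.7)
The plan is to observe that Theorem~\ref{thm:GVformula} expresses the GV invariants $n_j$ as the exponents in a canonical product expansion of a generating series built entirely from algebraic data of $A_{con}$, and then invoke a uniqueness statement for such Euler-type products.

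First I would argue that the left-hand side of (\ref{eq:DTAcon}) depends only on the isomorphism class of $A_{con}$ as a $\mathbb{C}$-algebra. Indeed, the moduli functor parametrizing surjections $A_{con} \twoheadrightarrow M$ in $\text{mod-}A_{con}$ with $\dim_{\mathbb{C}} M = j$ is defined purely in terms of $A_{con}$, so $\mathrm{Hilb}_j(A_{con})$ as a scheme, and hence its Behrend function $\nu$, is an invariant of $A_{con}$. Consequently, any algebra isomorphism $A_{con} \cong A_{con}'$ induces isomorphisms $\mathrm{Hilb}_j(A_{con}) \cong \mathrm{Hilb}_j(A_{con}')$ compatible with Behrend functions, so $\mathrm{DT}_j(A_{con}) = \mathrm{DT}_j(A_{con}')$ for all $j\ge 1$.

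Next, applying Theorem~\ref{thm:GVformula} to both sides gives the equality of formal power series
\begin{equation*}
\prod_{j\ge 1}(1-(-1)^j t^j)^{j n_j} = \prod_{j\ge 1}(1-(-1)^j t^j)^{j n_j'}.
\end{equation*}
I would then conclude $j n_j = j n_j'$, and hence $n_j = n_j'$, by the standard uniqueness of Euler product expansions: taking the formal logarithm of both sides and comparing coefficients of $t^j$ determines each exponent inductively starting from $j=1$ (the coefficient of $t$ fixes $n_1$, then the coefficient of $t^2$ fixes $n_2$ given $n_1$, and so on). This is the only place where any calculation occurs, and it is a routine inductive extraction, so I do not expect any serious obstacle.

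Finally, for the statement $l = l'$, I would use the characterization $l = \max\{j \ge 1 : n_j > 0\}$, which follows from the classification recalled before Theorem~\ref{dimAcon}: in a generic deformation, curve classes $j[C]$ occur precisely for $j=1,\ldots,l$, and $n_l \ge 1$ since the exceptional fiber of length $l$ must deform to at least one $(-1,-1)$-curve of class $l[C]$ (otherwise the total length after deformation would drop, contradicting flatness of $\cX \to \cT$). Since $n_j = n_j'$ for every $j$, taking the maximum index gives $l = l'$. The only conceptual subtlety is confirming that $l$ is recoverable from the sequence $(n_j)$, but this characterization is immediate from the description of generic deformations used throughout Section~\ref{sec:def}.
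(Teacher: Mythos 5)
Your proposal is correct and follows essentially the same route as the paper: apply Theorem~\ref{thm:GVformula} to both sides, equate the two Euler products, extract $n_j=n_j'$ by comparing coefficients, and recover $l$ via $l=\max\{j\ge 1: n_j\neq 0\}$. The extra details you supply (that $\mathrm{DT}_j(A_{con})$ is manifestly an invariant of the abstract algebra, and the inductive justification of uniqueness of the product expansion) are points the paper leaves implicit, not a different argument.
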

\begin{proof}
The formula (\ref{eq:DTAcon}) gives
\begin{align*}
\prod_{j\ge 1}\left(1-(-1)^j t^j\right)^{j n_j}
=\prod_{j\ge 1}\left(1-(-1)^j t^j\right)^{j n_j'}.
\end{align*}
By comparing the coefficients of the above equality, 
we easily obtain $n_j=n_j'$ for all $j\ge 1$. 
Since we have
\begin{align*}
l= \mathrm{max}\{j\ge 1 : n_j \neq 0\}, 
\end{align*}
we in particular obtain $l=l'$. 
\end{proof}

\section{The $\infty$-contraction algebra}\label{sec:inf}
In this section, we study the $A_\infty$-structure on $A^\infty_{con}$. When the singularity is weighted homogeneous, the Milnor algebra completely determine the local ring of the singularity. In the general cases, the Milnor algebra is not sufficient to determine the local ring. The additional structure needed is the action of the defining equation of the hypersurface on the Milnor algebra.

Let $A$ and $B$ be two $\ZZ$-graded or $\ZZ/2$-graded $A_\infty$ algebras. We say $B$ is \emph{derived Morita equivalent} with $A$ if there exists an $A_\infty$ $A-B$-bimodule $X$ such that the functor $-\otimes_A^L X$ induces a derived equivalence 
\[
\D A\cong \D B
\] where $\D A$ and $\D B$ are the derived categories of modules over $A$ and $B$. 
The following proposition shows that the $\infty$-contraction algebra recovers the Milnor algebra.
\begin{prop}\label{Recon1}
Suppose that $\hat{X}\to \hat{Y}$ and $\hat{X}^\prime\to \hat{Y}^\prime$ are three dimensional flopping contractions such that that $\hat{Y}$and $\hat{Y}^\prime$ are the spectrum of $\CC[[x,y,z,w]]/(W)$ and $\CC[[x,y,z,w]]/(W^\prime)$ respectively, for germs of functions $W$ and $W^\prime$. The Milnor algebras $B_W$ and $B_{W^\prime}$ are isomorphic as algebras if the corresponding $\infty$-contraction algebras are derived Morita equivalent.
\end{prop}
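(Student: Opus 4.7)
The plan is to route the argument through Hochschild cohomology of matrix factorization categories, using the fact that $HH^\bullet(\mathrm{MF}(W))$ recovers the Milnor algebra $B_W$ \emph{as an algebra} (not just as a vector space).

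First, I would observe that a derived Morita equivalence of $\ZZ/2$-graded $A_\infty$-algebras $A_{con}^\infty$ and $A_{con}^{\prime\,\infty}$ promotes to a quasi-equivalence of the enhancements of $\mathrm{MF}(W)$ and $\mathrm{MF}(W')$. Indeed, by Proposition \ref{generator}, $N^{st}$ is a compact generator of $\mathrm{MF}(W)$, and by Dyckerhoff's $\ZZ/2$-graded derived Morita theory (the result used just before Section \ref{sec:struc}), $\mathrm{MF}(W)$ is quasi-equivalent to the dg-category of perfect dg-modules over $\Hom^\bullet_{\mathrm{MF}(W)}(N^{st},N^{st})$, whose minimal model is $A_{con}^\infty$. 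Applying the same statement to $W'$ and chaining with the hypothesized bimodule equivalence produces a $\ZZ/2$-graded quasi-equivalence $\mathrm{MF}(W)\simeq\mathrm{MF}(W')$.

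Next, I would invoke the Morita invariance of Hochschild cohomology. A derived equivalence induced by a perfect bimodule induces an isomorphism of $\ZZ/2$-graded algebras
\begin{equation*}
HH^\bullet(\mathrm{MF}(W)) \;\cong\; HH^\bullet(\mathrm{MF}(W')).
\end{equation*}
Finally, one uses the HKR-type identification for matrix factorizations with isolated critical point (Dyckerhoff; Polishchuk--Vaintrob): since $n=4$ is even, one has a canonical isomorphism of $\ZZ/2$-graded algebras $HH^\bullet(\mathrm{MF}(W))\cong B_W$, where the right-hand side carries its natural commutative multiplication. Combining these three steps yields an algebra isomorphism $B_W\cong B_{W'}$.

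The main technical obstacle is the first step: one must be careful that the notion of derived Morita equivalence used at the level of $A_\infty$-algebras is strong enough to produce an equivalence of dg-enhancements (not merely of triangulated categories), because Hochschild cohomology is only an invariant of the dg/$A_\infty$-enhancement. Assuming, as is standard, that ``derived Morita equivalence'' means equivalence induced by a perfect $A_\infty$-bimodule (so that $-\otimes_A^\bL X$ lifts to a quasi-equivalence of pretriangulated hulls), this step goes through cleanly and the HKR-type identification does the rest.
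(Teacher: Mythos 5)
Your proposal is correct and follows essentially the same route as the paper: both pass through the compact generator $N^{st}$ and $\ZZ/2$-graded derived Morita theory to identify $\mathrm{MF}(W)$ with modules over $A_{con}^\infty$, then use Morita invariance of Hochschild cohomology together with Dyckerhoff's identification $\HH^\bullet(\mathrm{MF}(W))\cong B_W$ as algebras. Your added care about working at the level of dg/$A_\infty$-enhancements (rather than bare triangulated categories) is a point the paper leaves implicit, but it is the same argument.
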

\begin{proof}
By the derived Morita equivalence of $\ZZ/2$-graded dg-category and proposition \ref{generator}, $\D_{sg}(\hat{Y})$ is equivalent with the triangulated category of the dg category of modules over $A_{con}^\infty$. Therefore, the Hochschild cohomology of $\text{MF(W)}\cong\D_{sg}(\hat{Y})$ is isomorphic to the Hochschild cohomology of $A^\infty_{con}$ as algebras. By corollary 6.4 of \cite{Dyc09}, 
$\HH^\bullet(\text{MF}(W))$ is isomorphic to the Milnor algebra $B_W$ as algebras.
\end{proof}

In \cite{DW13}, Donovan and Weymss conjecture that the local ring of $Y$ at the singularity $p$ is determined by the contraction algebra.
\begin{conjecture}\emph{(\cite[Conjecture~1.4]{DW13})}\label{conjDW}
 Suppose that $X\to Y$ and $X^\prime\to Y^\prime$ are three dimensional flopping contractions in smooth quasi-projective 3-folds, to points $p$ and $p^\prime$ respectively.   To these, associate the contraction algebras $A_{con}$ and $B_{con}$. Then the completions of stalks at $p$ and $p^\prime$ are isomorphic if and 
only if $A_{con}\cong B_{con}$.
\end{conjecture}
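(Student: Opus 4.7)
The plan is to treat the two directions separately; the forward direction is essentially formal, while the backward direction requires lifting the algebra isomorphism to the $A_\infty$-level and then invoking a Mather--Yau-type reconstruction.

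For the forward implication, assume $\hat\cO_{Y,p} \cong \hat\cO_{Y',p'}$. By Theorem \ref{Acon=End} we have $A_{con} \cong \Hom^0_{\D_{sg}(\hat Y)}(N,N)$, so it suffices to exhibit $N$ as an intrinsic object of $\D_{sg}(\hat Y)$ up to Morita equivalence modulo free summands. I would argue this via Proposition \ref{generator}: the stabilization $N^{st}$ is the compact generator of $\text{MF}(W)$ arising from Van den Bergh's tilting bundle, and any two small resolutions of $\hat Y$ are connected by flops whose induced tilting bundles share the same non-free indecomposable MCM summands up to multiplicity, so their endomorphism algebras modulo the ideal of morphisms factoring through free summands are isomorphic. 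Theorem \ref{ReconCb} and the flop-flop formalism then yield the desired isomorphism with $B_{con}$.

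The backward direction is the substantial half. Starting from an algebra isomorphism $A_{con} \cong B_{con}$, the natural target is to lift it to a derived Morita equivalence of the corresponding $\infty$-contraction algebras; once this is achieved, Proposition \ref{Recon1} immediately yields an isomorphism of the ambient Milnor algebras $B_W \cong B_{W'}$. For weighted homogeneous $W$ the Mather--Yau theorem then recovers the completed local ring. For general $W$ one must extract, beyond the Milnor algebra, the action of $W$ itself; the candidate data is the Frobenius pairing $\sigma$ of Proposition \ref{Frob} together with the boundary-bulk map $\tau^{N^{st}}$, which encode residue-trace information carrying the defining equation. Combining these with the enumerative data recovered from $A_{con}$ via Theorem \ref{thm:GVformula} should in principle determine $\hat\cO_{Y,p}$.

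The principal obstacle is the lift from algebra isomorphism to $A_\infty$-isomorphism, an intrinsic formality question. A priori the relevant cohomology group $\HH^{2,\bullet}(A_{con})$ parametrizes nontrivial $A_\infty$-deformations, and there is no formal reason a generic algebra isomorphism should respect the higher Massey products. My plan to bypass this combines two inputs: first, the deformation-theoretic rigidification of Theorem \ref{ss}, which embeds $A_{con}$ into a flat family with semisimple generic fiber whose block structure is controlled by the GV invariants; second, the restrictive Katz--Morrison classification, which confines the underlying singularity to the types $cA_1, cD_4, cE_6, cE_7, cE_8$ with length at most six. The hope is that within this finite list the $A_\infty$-structure on $A_{con}^\infty$ is rigid once the Frobenius structure is fixed. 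If rigidity fails, a genuinely new input beyond the results developed in this excerpt will be required, and I expect this is precisely why the general conjecture remains open.
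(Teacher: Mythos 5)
The statement you are trying to prove is stated in the paper as Conjecture~\ref{conjDW} (Donovan--Wemyss, \cite[Conjecture~1.4]{DW13}); the paper itself gives no proof of it. What the paper does instead is formulate an $A_\infty$-enhanced variant, Conjecture~\ref{GenDWconj}, in which the hypothesis ``$A_{con}\cong B_{con}$'' is \emph{strengthened} to a derived Morita equivalence of the $\infty$-contraction algebras, and even that variant is only proved when the singularities are weighted homogeneous (Theorem~\ref{WH}, via Proposition~\ref{Recon1} and the Mather--Yau theorem, Theorem~\ref{MY}). So your proposal cannot be matched against a proof in the paper, and as written it is not a proof either; it is an outline whose crucial steps are exactly the open points.

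Concretely, two gaps. First, in the backward direction everything rests on lifting the bare algebra isomorphism $A_{con}\cong B_{con}$ to a derived Morita equivalence of $A^\infty_{con}$ and $B^\infty_{con}$: this is precisely the formality/rigidity problem that the paper avoids by changing the hypothesis, and no rigidity of the $A_\infty$-structure given the Frobenius form is established anywhere (indeed Section~\ref{sec:exam} shows the higher products are generically nontrivial, e.g.\ $m_4\neq 0$ already for width~$2$ in type $cA_1$). Moreover, even granting the lift, Proposition~\ref{Recon1} only recovers the Milnor algebra $B_W$, and Mather--Yau applies only in the weighted homogeneous case; in general $B_W$ does not determine $\widehat{\cO/(W)}$ --- one also needs the multiplication-by-$W$ action (the $\CC[t]$-structure), which the paper only \emph{speculates} can be extracted from the $B_\infty$-structure via Theorem~\ref{invHH}. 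Your appeal to the Frobenius pairing of Proposition~\ref{Frob}, the boundary-bulk map, and the GV data of Theorem~\ref{thm:GVformula} is not shown to recover this action. Second, the forward direction is not ``essentially formal'' either: with Definition~\ref{A_con} one must prove that $A_{con}$ is, up to \emph{isomorphism}, independent of the choice of small resolution and of the bundle $\cN$ (equivalently of $\cL$). Your argument via shared non-free MCM summands only controls these algebras up to Morita equivalence (multiplicities of the summand change the endomorphism algebra), and the paper explicitly flags this after Conjecture~\ref{conjDW}, suggesting the condition should be Morita equivalence rather than isomorphism; also Theorem~\ref{ReconCb} goes in the opposite direction (from $A_{con}$ to $\cC^b$) and does not help here. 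In short, the proposal correctly identifies where the difficulty lies, but both halves require inputs that neither the paper nor the proposal supplies.
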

From theorem \ref{ss}, we see that the different choices of the ample line bundles may lead to Morita equivalent contraction algebras. So it is natural to replace the condition $A_{con}\cong B_{con}$ in the above conjecture by the condition that $A_{con}$ and $B_{con}$ being Morita equivalent. However, this may not be sufficient. In section~\ref{sec:exam},  we will see that for almost all examples the $A_\infty$-structure on the $\infty$-contraction algebra is nontrivial.

We propose the following variation of the conjecture of Donovan and Wemyss and prove one special case.
\begin{conjecture}\label{GenDWconj}
 Suppose that $X\to Y$ and $X^\prime\to Y^\prime$ are three dimensional flopping contractions in smooth quasi-projective 3-folds, to points $p$ and $p^\prime$ respectively.   To these, associate the $\infty$-contraction algebras $A^\infty_{con}$ and $B^\infty_{con}$. Then the completions of stalks at $p$ and $q$ are isomorphic if and 
only if $ A^\infty_{con}$ and $ B^\infty_{con}$ are derived  Morita equivalent.
\end{conjecture}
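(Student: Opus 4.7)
The plan is to split the conjecture into its two implications, the ``only if'' direction being essentially formal, while the ``if'' direction will require genuine new input.

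For the only if direction, an isomorphism $\hat{\cO}_{Y,p}\cong \hat{\cO}_{Y',p'}$ identifies the formal flopping contractions over the singular points (after restricting the relative ample line bundles if necessary). Any two choices of relative ample $\cL$ producing tilting bundles $\cU$ and $\cU'$ as in \eqref{extN} give contraction algebras whose semisimple deformations coincide by Theorem~\ref{ss}, and $A^\infty_{con}$ is intrinsic to $\D_{sg}(\hat{Y})$ by Corollary~\ref{Ext1v} together with the minimal model theorem, so the resulting $\infty$-contraction algebras are derived Morita equivalent.

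For the if direction, a derived Morita equivalence $A^\infty_{con}\simeq B^\infty_{con}$ lifts, via Proposition~\ref{generator} and the derived Morita theory for $\ZZ/2$-graded dg-categories, to a $\ZZ/2$-graded $A_\infty$-equivalence $\text{MF}(W)\simeq \text{MF}(W')$. The strategy is then to upgrade Proposition~\ref{Recon1} from an isomorphism of Milnor algebras to an equivalence of the pairs $(B,W)$ and $(B',W')$ up to contact equivalence, and then invoke the Mather--Yau theorem to conclude that the completions of stalks are isomorphic. To reconstruct $W$ from $\text{MF}(W)$, one would transport along the equivalence the additional structures that refine $\HH^\bullet(\text{MF}(W))\cong B_W$: the BV operator on $\HH^\bullet$, the canonical Calabi--Yau/Frobenius structure (Proposition~\ref{Frob}), and the trace pairing constructed in Murfet's boundary--bulk map. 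These together with the compatible $L_\infty$-structure on cyclic cohomology should single out a preferred element of $B/J_W^2$ lifting a generator of $(W)\,\mathrm{mod}\,J_W^2$, which is exactly the data needed to recover the Tjurina algebra $B/(W,J_W)$.

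The principal obstacle is precisely this last step: for non weighted homogeneous $W$ the Milnor algebra $B_W$ is strictly weaker than the Tjurina algebra, and one must genuinely exploit the higher $A_\infty$-products on $A^\infty_{con}$ (equivalently, the non-formality of $\Hom^\bullet_{\D_{sg}}(N,N)$) to pin down the residue class of $W$. In the weighted homogeneous case $W\in J_W$, so $B_W$ coincides with the Tjurina algebra and Mather--Yau applies directly to the output of Proposition~\ref{Recon1}; this is exactly the argument that makes Theorem~\ref{WH} accessible, and it is the failure of this coincidence in general which is the crux of the conjecture. A reasonable intermediate step would be to prove the conjecture modulo a well-posed reconstruction theorem for the Tjurina algebra from $(\text{MF}(W),\text{ CY structure})$, reducing the question to a purely noncommutative Hodge-theoretic statement about categories of matrix factorisations.
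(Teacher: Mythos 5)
The statement you set out to prove is Conjecture~\ref{GenDWconj}, which the paper itself does not prove: it establishes only the weighted homogeneous case (Theorem~\ref{WH}), by combining Proposition~\ref{Recon1} (the Hochschild cohomology of $A^\infty_{con}$ recovers the Milnor algebra $B_W$, via Proposition~\ref{generator}, Dyckerhoff's computation and Morita invariance of Hochschild cohomology) with the Mather--Yau type theorem of Benson--Yau (Theorem~\ref{MY}), which for weighted homogeneous $W$ lets $B_W$ alone determine $\widehat{\cO/(W)}$. For the general case the paper only sketches a strategy: recover the $\CC[t]$-algebra structure on $B_W$ given by multiplication by $W$ (which by \cite{GLS06} determines the local ring) by lifting $W$ to an even cocycle in the Hochschild complex whose class is invariant under derived Morita equivalence, using Keller's $B_\infty$-invariance (Theorem~\ref{invHH}). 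Your proposal reproduces this architecture --- the essentially formal ``only if'' direction, the reduction to a Mather--Yau statement, and the observation that $W\in J_W$ in the weighted homogeneous case makes Proposition~\ref{Recon1} suffice --- but, like the paper, it does not prove the conjecture.

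The genuine gap is the step where you assert that the BV operator, the Frobenius/Calabi--Yau structure and the cyclic $L_\infty$-structure ``should single out a preferred element'' pinning down the class of $W$ (equivalently the Tjurina algebra). No argument is given, and this is exactly the open content of the conjecture: the derived-Morita invariants actually established in the paper yield only $B_W$ as a $\CC$-algebra, and by the counterexample cited from \cite{BY90} this is strictly weaker than the Tjurina data when $W$ is not weighted homogeneous. Your final paragraph concedes this by reducing the conjecture to a ``well-posed reconstruction theorem'' that is not supplied, which parallels the paper's own unproved speculation about lifting $W$ through the $B_\infty$-structure. A minor further point: in the ``only if'' direction the appeal to Theorem~\ref{ss} is beside the point; the correct argument is simply that an isomorphism $\hat{\cO}_{Y,p}\cong\hat{\cO}_{Y',p'}$ gives an equivalence $\mathrm{MF}(W)\simeq\mathrm{MF}(W')$, and $N^{st}$, $(N')^{st}$ are compact generators (Proposition~\ref{generator}), so their derived endomorphism algebras are derived Morita equivalent by the $\ZZ/2$-graded Morita theory already invoked in Section~\ref{sec:struc}. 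So your write-up is a reasonable reduction of the conjecture to a categorical reconstruction problem, but it should be presented as such, not as a proof; only the weighted homogeneous case (Theorem~\ref{WH}) is actually closed by the arguments you cite.
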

Let $f$ and $g$ be two germs of holomorphic (or formal) functions on $(\CC^n,0)$. We say $f$ and $g$ are \emph{right equivalent} if they differ by a holomorphic (or formal) change of variables on the domain. We say $f$ is \emph{weighted homogeneous} if $f$ is right equivalent with a weighted homogeneous polynomial.  In section~\ref{sec:exam}, we give several examples of compound du Val singularities that admits small resolutions. Among these examples, the type $cA_1$ singularities and the Laufer's $cD_4$ singularity are weighted homogeneous. 

The following result is a consequence of the famous theorem of Mather and Yau.
\begin{theorem}(Proposition 2.5, Theorem 4.2 \cite{BY90})\label{MY}
Let $f,g:(\CC^n,0)\to\CC$ be two germs of holomorphic functions with isolated singularities at the origin. Denote the corresponding completions of the rings of functions on the hypersurfaces at the origin by $\widehat{\cO/(f)}$ and  $\widehat{\cO/(g)}$ and denote the corresponding Milnor algebras by $B_f$ and $B_g$. Assume that $f$ and $g$ are weighted homogeneous. Then  $\widehat{\cO/(f)}$ is isomorphic to $\widehat{\cO/(g)}$ if and only if $B_f\cong B_g$ as algebras.
\end{theorem}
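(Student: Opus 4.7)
The plan is to deduce this from two classical inputs: Euler's identity, which equates the Milnor and Tjurina algebras for weighted homogeneous functions, and the Mather--Yau theorem, which characterizes isolated hypersurface singularities by their Tjurina algebras. Write $T_f := \CC[[x_1,\ldots,x_n]]/(f, J_f)$ for the Tjurina algebra, where $J_f = (\partial_1 f, \ldots, \partial_n f)$. I would begin with the weighted homogeneous reduction. Assuming $f$ is already weighted homogeneous with weights $w_1, \ldots, w_n$ and weighted degree $d$ (harmless, since right equivalence preserves both the Milnor algebra and the hypersurface ring via the change of coordinates), Euler's identity gives $d \cdot f = \sum_i w_i x_i \partial_i f$, so $f \in J_f$. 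Hence
\[
B_f = \CC[[x_1,\ldots,x_n]]/J_f = \CC[[x_1,\ldots,x_n]]/(f, J_f) = T_f,
\]
and similarly $B_g = T_g$. This reduces the theorem to the statement that $\widehat{\cO/(f)} \cong \widehat{\cO/(g)}$ if and only if $T_f \cong T_g$, with no weighted homogeneity needed.

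For the forward direction, the Tjurina algebra is intrinsic to the hypersurface ring: it is the first cotangent cohomology $T^1$ of the singularity, computed for a hypersurface as the cokernel of the map $T_{\CC^n}|_{V(f)} \to N_{V(f)/\CC^n}$ sending $\partial_i \mapsto \partial_i f$. Thus any ring isomorphism $\widehat{\cO/(f)} \cong \widehat{\cO/(g)}$ induces $T_f \cong T_g$, giving $B_f \cong B_g$ by the first step. For the converse, $B_f \cong B_g$ gives $T_f \cong T_g$, and the Mather--Yau theorem then produces a formal automorphism $\phi$ of $\CC[[x_1,\ldots,x_n]]$ and a unit $u$ with $\phi^{\ast}(f) = u \cdot g$, which manifestly induces the desired isomorphism $\widehat{\cO/(f)} \cong \widehat{\cO/(g)}$.

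The main obstacle is the Mather--Yau theorem itself in the converse direction. Its proof begins by lifting an algebra isomorphism $T_f \cong T_g$ to a formal isomorphism of the ambient $\CC[[x_1,\ldots,x_n]]$, and then modifies this lift by a sequence of infinitesimal integrations so that the ideals $(f)$ and $(g)$ are actually carried onto each other. The isolated-singularity hypothesis, which forces $T_f$ and $T_g$ to be finite-dimensional, is essential: it provides the filtration on $J_f$ that makes the successive approximations converge in the $\mathfrak{m}$-adic topology. The weighted homogeneous hypothesis plays no role in this deeper step; it is used only to identify the Tjurina algebra with the Milnor algebra, which then makes the Mather--Yau criterion directly applicable to the invariant $B_f$.
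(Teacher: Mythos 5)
Your proof is correct and is essentially the argument the paper delegates to its citation: the paper offers no proof of this theorem, merely noting it is a consequence of Mather--Yau as established in \cite{BY90}, and your route --- reduce to the Tjurina algebra via Euler's identity ($f\in J_f$ for weighted homogeneous $f$, so $B_f=T_f$), use the intrinsic nature of $T_f$ for the forward direction, and the (formal) Mather--Yau theorem for the converse --- is exactly that standard derivation.
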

\begin{theorem}\label{WH}
Assume that $Y$ and $Y^\prime$ have weighted homogeneous hypersurface singularities. Conjecture \ref{GenDWconj} holds.
\end{theorem}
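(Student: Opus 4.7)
The plan is to reduce the statement to a purely algebraic comparison via Proposition~\ref{Recon1} and Theorem~\ref{MY} (Mather--Yau), and treat the two implications separately; both directions turn out to be short given the machinery assembled so far.

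Assume first that $A_{con}^\infty$ and $B_{con}^\infty$ are derived Morita equivalent. Proposition~\ref{Recon1} then gives an isomorphism of Milnor algebras $B_W\cong B_{W'}$ as ordinary algebras. Since both $W$ and $W'$ are right-equivalent to weighted homogeneous polynomials, Theorem~\ref{MY} upgrades this to an isomorphism $\widehat{\cO/(W)}\cong \widehat{\cO/(W')}$, which is precisely the identification of the completed stalks $\hat{\cO}_{Y,p}\cong \hat{\cO}_{Y',p'}$.

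Conversely, suppose $\hat{\cO}_{Y,p}\cong \hat{\cO}_{Y',p'}$. Then $\hat Y\cong \hat Y'$ as formal schemes, so Orlov's category $\D_{sg}(\hat Y)$, equivalently $\mathrm{MF}(W)$, depends only on this common data and we obtain an equivalence $\mathrm{MF}(W)\simeq \mathrm{MF}(W')$ of $\ZZ/2$-graded $A_\infty$-categories. By Proposition~\ref{generator}, $N^{st}$ and $N'^{st}$ are compact generators of the two sides; they need not be identified under the equivalence, since the small resolution of a compound du Val singularity is in general not unique and we have additional freedom in the choice of relative ample line bundle. However, by the $\ZZ/2$-graded derived Morita theory recalled in \cite[Chapter~5]{Dyc09}, any two compact generators of the same $A_\infty$-category produce derived Morita equivalent endomorphism $A_\infty$-algebras. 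Passing to minimal models, $A_{con}^\infty$ and $B_{con}^\infty$ are derived Morita equivalent.

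The only subtle point is in the second direction: one has to make sure that the ambiguity arising from the non-uniqueness of the small resolution and of the tilting bundle is exactly absorbed by the flexibility of ``derived Morita equivalence'' rather than strict isomorphism of $A_\infty$-algebras. This is precisely why Conjecture~\ref{GenDWconj} is phrased using Morita equivalence rather than isomorphism (compare the discussion immediately after Conjecture~\ref{conjDW} and Theorem~\ref{ss}), so the statement is already tailored to make this argument go through. The weighted homogeneity hypothesis enters only through Mather--Yau, and one should expect that any future strengthening of Theorem~\ref{MY} beyond the weighted homogeneous case would yield the corresponding extension of Theorem~\ref{WH}.
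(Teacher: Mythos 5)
Your proposal is correct and follows essentially the same route as the paper: Proposition~\ref{Recon1} recovers the Milnor algebra (as Hochschild cohomology) from the $\infty$-contraction algebra up to derived Morita equivalence, and the Mather--Yau type result Theorem~\ref{MY} then identifies the completed local rings in the weighted homogeneous case. The only difference is that you spell out the converse direction (isomorphic stalks give equivalent matrix factorization categories, hence derived Morita equivalent endomorphism algebras of the compact generators $N^{st}$, $N'^{st}$), which the paper leaves implicit; that added detail is sound and does not change the argument.
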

\begin{proof}
By theorem \ref{MY}, the local rings of the weighted homogeneous isolated hypersurface singularities are isomorphic if and only if their Milnor algebras are isomorphic.  Then by proposition \ref{Recon1}, the Milnor algebra can be reconstructed from the infinity contraction algebra by taking its Hochschild cohomology. So the conjecture is proved.
\end{proof}

To finish this section, we sketch a possible way to prove conjecture~\ref{GenDWconj} for the general cases. And we will return to this in the future work.

The Milnor algebra $B_f$ is equipped with a $\CC[t]$-algebra structure by letting $t$ acts by multiplying with $f$, treated as an element in $B_f$. This action is trivial if $f$ is weighted homogeneous since $f$ belongs to the jacobian ideal $J_W$. However, it is not the case in general (see the $cE_6$ singularity example in section \ref{sec:exam}). Though the $\CC$-algebra structure on $B_f$ does not determine the local ring of the singularity in general (see the counter example in \cite{BY90}),  the $\CC[t]$-algebra structure on $B_f$ determines $\widehat{\cO/(f)}$ (cf.~\cite[Theorem~2.28]{GLS06}). 
The conjecture~\ref{GenDWconj}  can be reduced to the problem of finding a categorical construction for the action of $f$ on $B_f$. 

Let $A$ be a $\ZZ$-graded or $\ZZ/2$-graded $A_\infty$-algebra. 
 Then Hochschild cochain complex
\[
C^\bullet(A,A):=\Hom^\bullet(\bigoplus_{n\geq0} A[1]^{\otimes n},A)
\] 
is a $B_\infty$-algebra in the sense of Getzler and Jones (section 5.2 \cite{GJ94}). The $B_\infty$-structure induces the cup product and Gerstenhaber bracket on the Hochschild cohomology.
Keller proved that the derived Morita equivalences preserve the homotopy type of $C^\bullet(A,A)$ as $B_\infty$-algebras.
\begin{theorem}\label{invHH}\emph{(\cite[Section~3.2]{Keller03})}
Let $A$ and $B$ be dg-algebras and let $X$ be a dg $A-B$-bimodule. Suppose the functor 
\[
-\otimes_A^L X: \D A\to\D B
\] is an equivalence, there is a canonical isomorphism 
\[
\phi_X : C^\bullet(B,B)\to C^\bullet(A,A)
\] in the homotopy category of $B_\infty$-algebras.
\end{theorem}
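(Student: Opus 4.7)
The plan is to realize $C^\bullet(A,A)$ and $C^\bullet(B,B)$ as restrictions of the Hochschild cochain complex of a single auxiliary dg category in which both algebras sit as full sub-dg-categories, and to exploit the fact that \emph{restriction} of Hochschild cochains along a Morita-generating dg inclusion is tautologically a $B_\infty$-algebra morphism.

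First I would recall the intrinsic framework. For any small dg category $\mathcal{C}$, the Hochschild cochain complex $C^\bullet(\mathcal{C})$ carries a canonical brace structure together with a cup product, which assemble into a $B_\infty$-algebra structure in the sense of Getzler--Jones and McClure--Smith. When $A$ is a dg algebra, regarded as a dg category with one object, this recovers the $B_\infty$-structure on $C^\bullet(A,A)$ introduced above. A dg functor $F\colon \mathcal{C}\to\mathcal{D}$ induces a restriction map $F^{*}\colon C^\bullet(\mathcal{D})\to C^\bullet(\mathcal{C})$ which is manifestly a morphism of $B_\infty$-algebras, because the brace operations and the cup product are defined by formulas using only composition in the target dg category and hence commute strictly with restriction.

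Next I would build a bridging dg category. Setting $Y=\RHom_B(X,B)$, the Morita hypothesis on $X$ yields bimodule quasi-isomorphisms $X\otimes_B^{\bL} Y\simeq A$ and $Y\otimes_A^{\bL} X\simeq B$. After a standard rectification replacing $X$ and $Y$ by cofibrant bimodules, these assemble into a dg category $\mathcal{E}$ with two objects $1,2$, Hom complexes $\mathcal{E}(i,i)=A,B$, $\mathcal{E}(1,2)=X$, $\mathcal{E}(2,1)=Y$, and composition encoding the bimodule actions together with the unit and counit of the equivalence. The full sub-dg-categories on $\{1\}$ and on $\{2\}$ coincide with $A$ and $B$ respectively, and by the Morita hypothesis each of the two objects split-generates the perfect hull of $\mathcal{E}$. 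I would then invoke the Morita-invariance lemma for Hochschild cochains: a fully faithful dg inclusion whose image split-generates the perfect hull of the target induces a \emph{quasi-isomorphism} on Hochschild cochain complexes. Applied to the two inclusions $i_A,i_B\colon\{i\}\hookrightarrow \mathcal{E}$, this produces a zig-zag
\[
C^\bullet(B,B)\xleftarrow{\;i_B^{*}\;}C^\bullet(\mathcal{E})\xrightarrow{\;i_A^{*}\;}C^\bullet(A,A)
\]
of $B_\infty$-quasi-isomorphisms, and $\phi_X$ is defined by inverting $i_B^{*}$ in the homotopy category of $B_\infty$-algebras.

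The main obstacle is the Morita-invariance lemma itself: the $B_\infty$-compatibility of $\phi_X$ drops out of the framework, but proving that the restriction map is an actual quasi-isomorphism of complexes is the nontrivial content. The standard approach is to identify the Hochschild cochain complex of a dg category $\mathcal{D}$ with the derived endomorphism complex of the diagonal bimodule on its perfect hull, and to show that passage to a split-generating subcategory does not change this complex up to quasi-isomorphism; this reduces to the familiar statement that a set of compact generators determines the compactly generated derived category. Once this lemma is secured, independence of $\phi_X$ from the cofibrant replacements used to construct $\mathcal{E}$, and naturality in the bimodule $X$, are routine verifications in the Morita model structure on dg categories.
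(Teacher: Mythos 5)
The paper offers no proof of this statement --- it is quoted verbatim from Keller --- so the only meaningful comparison is with Keller's own argument, and your overall strategy (pass through an auxiliary two-object dg category containing $A$ and $B$ as full sub-dg-categories, use that restriction of Hochschild cochains along a full inclusion is a strict $B_\infty$-morphism, show the two restrictions are quasi-isomorphisms, then invert one in the homotopy category) is indeed the right family of ideas. The genuine gap is the construction of your bridging category $\mathcal{E}$. The equivalences $X\otimes^{\bL}_B Y\simeq A$ and $Y\otimes^{\bL}_A X\simeq B$ and their unit/counit coherences exist only in the derived category; replacing $X$ and $Y$ by cofibrant bimodules does not produce strictly associative composition laws --- already the equality of $(x\cdot y)\cdot x'$ and $x\cdot(y\cdot x')$ in $\Hom_{\mathcal{E}}(2,1)$ encodes the coherence of the counit with the module structures, which holds only up to homotopy. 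So the ``standard rectification'' you invoke does not exist in the form stated. The natural repair, realizing $\mathcal{E}$ as the full dg subcategory of cofibrant dg $B$-modules on $\{X,B\}$, gives strict composition but replaces $A$ by $\End_B(X)$, and comparing $C^\bullet(A,A)$ with $C^\bullet(\End_B(X),\End_B(X))$ along the quasi-isomorphism $A\to\End_B(X)$ is not a naive restriction (Hochschild cochains are not functorial for algebra morphisms); that comparison is itself an instance of the theorem being proved, so one must take care not to argue circularly.

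Keller's route avoids this entirely: he glues one-sidedly, forming the upper-triangular dg category with $\Hom(1,2)=X$ and $\Hom(2,1)=0$, whose composition is just the bimodule action and requires no rectification. The Morita hypothesis then enters not through split-generation of a two-sided bridge, but through the observation that $C^\bullet(\mathcal{E})$ is the homotopy fibre of the natural maps from $C^\bullet(A,A)\times C^\bullet(B,B)$ to the complex computing $\RHom_{A\text{-}B}(X,X)$; when $X$ is a tilting bimodule both of these maps are quasi-isomorphisms (since $\RHom_B(X,X)\simeq A$, etc.), hence both restrictions $C^\bullet(\mathcal{E})\to C^\bullet(A,A)$ and $C^\bullet(\mathcal{E})\to C^\bullet(B,B)$ are $B_\infty$-quasi-isomorphisms and $\phi_X$ is obtained by inverting one of them. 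Your Morita-invariance lemma for split-generating full subcategories is correct and is also due to Keller, but as written your proof rests on a strictification step that fails; either adopt the triangular gluing or carry out the $\End_B(X)$ comparison by a separate, non-circular argument.
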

Take $B$ to be the $\infty$-contraction algebra $A^\infty_{con}$. We speculate that the $W$ can be lifted to an even cocycle in $C^\bullet(B,B)$ using the $B_\infty$-structure such that its cohomology class is invariant under the derived Morita equivalence.

\section{Examples}\label{sec:exam}
In this section, we study the ($\infty$-)contraction algebras associated to three types of singularities. The first class of examples are compound du Val singularities of type $cA_1$ that admit small resolution by a length 1 curve. They are classified by a positive integer $k$ called width. The normal bundle of the exceptional curve is $\cO(-1)\oplus\cO(-1)$ if $k=1$ and $\cO\oplus\cO(-2)$ when $k>1$. In the $k=1$ case, the contraction algebra has trivial $A_\infty$-structure. However, for $k>1$ the contraction algebra carries nontrivial $A_\infty$ structure. 

The second example have singularities of type $cD_4$. Its scheme theoretical fiber has length 2. We calculate the dimension of the contraction algebra using the Riemann-Roch formula of the matrix factorization. This calculation requires no knowledge about the ring structure on $A_{con}$. Moreover, we compute the boundary bulk map and the Frobenius structure explicitly and verify conjecture \ref{flatness}. 

In \cite{Wey}, Weymss and Brown compute the contraction algebra of a $cE_6$ flopping contraction. In this example, the fiber has length 3. This is a first non-homogeneous example. Based on the result of \cite{Wey}, we compute the boundary bulk map for it. It turns out that conjecture \ref{flatness} also holds. 

\subsection{Length 1 case}

Consider the hypersurface singularity 
\[
W=x^2-y^{2k}+zw=0, 
\] where $k$ is a positive integer. Denote $R$ for $\CC[[x,y,z,w]]/(W)$ and $Y$ for $\Spec R$. It admits a $2\times 2$ matrix factorization
\[
\Psi=\left(\begin{array}{cc}w & -x-y^k\\x-y^k & z\end{array}\right), \ \Phi=\left(\begin{array}{cc}z& x+y^k\\-x+y^k& w\end{array}\right).
\]
Set $N=\text{coker}(\Psi)$. As usual, we denote the above matrix factorization by $N^{st}$. $Y$ admits two small resolutions $X$ and $X^+$ such that the exceptional fibers are both a  smooth rational curve of length 1. Denote the exceptional curve of $f: X\to Y$ by $C$. The normal bundle of $C$ is $\cO(-1)\oplus \cO(-1)$ if $k=1$ and $\cO\oplus\cO(-2)$ if $k>1$. The number $k$ is referred as the \emph{width}. By the theorem of Van den Bergh \cite{VdB04}, $R\oplus N$ is a tilting object for $\D^b(coh(X))$ (cf.~\cite[Section~3.3]{AM10}). The scalar multiplication by $y$ defines an endomorphism of $N^{st}$, and therefore an element in $A_{con}$.  The dimension of $A_{con}$ is computed by the Riemann-Roch formula of matrix factorization (\ref{HRR}).  We obtain $\chi(N^{st},N^{st})=k$.
One can verify that $A_{con}$ is isomorphic to $\CC[a]/(a^k)$ with $a$ acts by scalar multiplication with $y$. 
The Milnor algebra $B_W$ is $\CC[y]/(y^{2k-1})$.  The boundary bulk map $\tau$ sends $a^n$ to $y^{k-1+n}$ for $n=0,1,\ldots,k-1$. The socle ideal $(a^{k-1})$ is mapped to the socle ideal $(y^{2k-1})$. The kernel of the Frobenius trace on $A_{con}$ is $\CC\{1,a,\ldots,a^{k-2}\}$.

Consider the case $k=2$. The contraction algebra carries nontrivial $A_\infty$ structure. 
By the calculation above, $A_{con}$ is isomorphic to the algebra of dual number $\CC[\ep]/\ep^2$ and the  $B_W$ is $\CC[y]/y^3$. 
The bar complex $Bar(\CC[\ep]/\ep^2)$ is 
\[
\xymatrix{
0&\CC\ar[l]&\CC u^{-1}\ar[l]_0&\CC u^{-2}\ar[l]_0&\ar[l]&\ldots
}
\] with $\deg(u)=1$.
The normalized Hochschild cochain $C^n=\Hom(\CC u^n,\CC[\ep]/\ep^2)$ can be identified with $\CC[\ep]/\ep^2$ by $u^n\mapsto a+b\ep$. The differential $Q$ acts by
\[
\begin{split}
Q_0&=0 \\
Q_n(a+b\ep)&=(1+(-1)^{n+1})a\ep, \ \ \ n>1.
\end{split}
\]
As a consequence, 
\[
\HH^i(\CC[\ep]/\ep^2)=
\begin{cases}
\CC[\ep]/\ep^2 & i=0;\\
\CC & i=2k, k>0;\\
\CC & i=2k+1, k\geq0
\end{cases}
\]
On the other hand, because $N^{st}$ is a compact generator of $\text{MF}(W)$  the Hochschild cohomology of $A^\infty_{con}$ is isomorphic to the Milnor algebra $B_W=\CC[w]/w^3$. 
Using the method developed in \cite{AM10}, we compute the $A_\infty$ structure on $A^\infty_{con}$. It turns out that there is a higher Massey product 
\[
m_4(\ep,\ep,\ep,\ep)=1.
\]
In this example, $n_1=2$ and $n_j=0$ for $j>0$. The dimension of the Milnor algebra is 3. Under a generic deformation, $W=0$ is deformed to two $cA_1$ singularities of $k=1$. So we see that the dimension of $\HH_0(A_{con})$ is a deformation invariant while the dimension of the Milnor algebra is not.

\subsection{Laufer's flop}
The following example, called Laufer's flop, is studied
 in~\cite[Section~4.4]{AM10} and~\cite{DW13}. 

Consider the hypersurface singularity
\[
W=x^2+y^3+wz^2+w^{2k+1}y=0
\] for $k>0$. Notice that it is a weighted homogeneous hypersurface singularity with weight 
\[
(\frac{6k+3}{4},\frac{2k+1}{2},\frac{6k+1}{4},1).
\]
It admits a matrix factorization:
\[
\Psi=\left(\begin{array}{cccc}x & y &z&w^k\\-y^2&x&-yw^k&z\\
-wz&w^{k+1}&x&-y\\ -yw^{k+1}&-wz&y^2&x \end{array}\right), \ \Phi=\left(\begin{array}{cccc}x&-y&-z&-w^k\\y^2&x&yw^k&-z\\wz&-w^{k+1}&x&y\\yw^{k+1}&wz&-y^2&x\end{array}\right).
\]
Denote $\text{coker}(\Psi)$ by $N$. One can check that $R\oplus N$ is a tilting object.
We calculate that $ch(N^{st})= 2(6k+3) yw^k$. To compute the residue, we need to make a change of variable 
\[
\left(\begin{array}{c}2x\\3y^3\\ \frac{2z^3}{2k+1}\\ \frac{(2k+1)w^{4k+2}}{3}\end{array}\right)=\left(\begin{array}{cccc}1&0&0&0\\0&y&\frac{z}{2(2k+1)}&\frac{-w}{2k+1}\\0&0&-w^{2k-1}y&\frac{2z}{2k+1}\\ 0&\frac{(2k+1)w^{2k+1}}{3}&\frac{yz}{2}&-yw\end{array}\right)\cdot \left(\begin{array}{c}\partial_x W\\ \partial_y W\\ \partial_z W\\ \partial_w W\end{array}\right)
\]
The determinant of the above matrix is 
\[
y(y^2w^{2k}-\frac{yz^2}{2k+1})+\frac{2k+1}{3}w^{2k+1}(\frac{z^2}{(2k+1)^2}-\frac{w^{2k}y}{2k+1}).
\]
\begin{equation*}
\begin{split}
\chi(N^{st},N^{st})&=\Res \left[\begin{array}{ccccc}4(6k+3)^2 y^2 w^{2k}\cdot dx_1\wedge\ldots \wedge dx_n\\
\partial_1 W,\ldots \partial_n W\end{array}\right]\\
&=\Res \frac{4(6k+3)^2y^2w^{2k}\cdot \frac{w^{2k+1}z^2}{6k+3}}{2x\cdot 3y^2\cdot \frac{2z^2}{2k+1}\cdot \frac{2k+1}{3}w^{4k+2}}\\
&=(6k+3)\Res \frac{1}{xyzw}\\
&=6k+3
\end{split}
\end{equation*}
So $A_{con}$ has dimension $6k+3$. Using the method in \cite{AM10}, Donovan and Wemyss computed the algebra structure and obtain 
\[
A_{con}\cong\CC\langle a,b\rangle / (ab+ba,a^2-b^{2k+1}).
\] (See~\cite[Example~3.14]{DW13}.) 

We focus on the simplest case of $k=1$. 
The generators $a$ and $b$ are degree zero morphisms of $N^{st}$. They  are  represented by the matrices 
\[
a=\left(\begin{array}{cccc}0&1&0&0\\-y&0&0&0\\0&0&0&1\\0&0&-y&0\end{array}\right) \ \
b=\left(\begin{array}{cccc}0&0&1&0\\0&0&0&-1\\-w&0&0&0\\0&w&0&0\end{array}\right) 
\]  (cf.~\cite[Section~4.4]{AM10}.)

The Milnor algebra $B_W$ is a vector space of dimension 11 with basis
\[
1,y,y^2,y^2w,yz,yz^2,yw,z,z^2,w,w^2.
\]
The socle ideal is spanned by the element $yz^2=-3y^2w^2$
We compute the boundary bulk map from $A_{con}$ to $B_W$
\begin{equation*}
\begin{array}{cccc}
\tau(1)= -18 yw, & \tau(a)= 6yz, & \tau(b)=6w^3\\
\tau(a^2)=18y^2w, & \tau(a^2b)=-6yw^3=0, &\tau(b^2)=18yw^2,  &\tau(a^2b^2)=18y^2w^2.
\end{array}
\end{equation*}
The commutator $[A_{con},A_{con}]$ consists of elements $a^2b,ab$ and $ab^2$. By proposition \ref{commutator}, the boundary bulk morphism will map them to zero. The socle ideal $(a^2b^2)$ of $A_{con}$ maps to the socle ideal of $B_W$.

In this example, $n_1=5$, $n_2=1$ and $n_j=0$ for $j>0$. The first Hochschild homology $\HH_0(A_{con})$ is $A_{con}/[A_{con},A_{con}]$. As a vector space, it is spanned by $1,a,a^2,b,b^2,a^2b^2$. So the dimension is  $n_1+n_2$.



\begin{thebibliography}{XXXX}
\bibitem{AM10} P. S.~Aspinwall, D.R.~Morrsion,\emph{Quivers from Matrix Factorizations}, Commun.~ Math~ Phys.~, vol.~313 (2012), 607--633.
\bibitem{Aus78} M.~Auslander, \emph{Functors and morphisms determined by objects}, Representation theory of algebras (Proc.~Conf., ~Temple ~Univ.,~ Philadelphia,~ Pa., ~1976), Dekker, New York, 1978, pp. 1–244. Lecture Notes in Pure Appl. Math., Vol. 37.
\bibitem{BB15} A.~Bodzenta, A.~Bondal, \emph{Flops and spherical functors}, preprint arxiv:1511.00665v1
\bibitem{BKL01} J.~Bryan, S.~Katz,  N. C.~Leung, \emph{Multiple covers and integrality conjec- ture for rational curves on Calabi-Yau threefolds}, J. ~Algebraic Geom. 10 (2001), 549--568.
\bibitem{BY90} M.~Benson, S.~S.~T.~Yau, \emph{Equivalences between isolated hypersurfaces singularities}, Math.~Ann.~ 287, 107--134 (1990)
\bibitem{Dyc09} T.~Dyckerhoff, \emph{Compact generators in categories of matrix factorizations}, Duke~Math.~J.~ vol 159, no.~2 (2011), 233--274.
\bibitem{DW13} W.~Donovan, M.~Wemyss, \emph{Noncommutative deformations and flops}, preprint, arXiv:1309.0698.
\bibitem{DW15} W.~Donovan, M.~Wemyss, \emph{Contractions and deformations}, preprint arXiv:1511.00406.
\bibitem{GJ94} E.~Getzler , J. D. S.~ Jones, \emph{Operads, homotopy algebra, and iterated integrals for double
loop spaces}, preprint hep-th/9403055.
\bibitem{GLS06} Gert-Martin Greuel, Christoph Lossen, Eugenii Shustin, \emph{ Introduction to Singularities and Deformations}, Springer, 2006.
\bibitem{IW10} O.~Iyama, M.~ Wemyss, \emph{Maximal modifications and Auslander-Reiten duality for non-isolated singularities}, preprint arxiv:1007.1296.
\bibitem{Katz08} S. Katz, \emph{Genus zero Gopakumar-Vafa invariants of contractible curves}, J. ~Differential. Geom. ~79 (2008), 185--195.
\bibitem{Keller99} B.~Keller, \emph{On the cyclic homology of exact categories}, J. ~Pure~ Appl.~ Algebra 136 (1999), no. 1, 1–56.
\bibitem{Keller03} B.~Keller, \emph{Derived invariance of higher structures on the Hochschild complex}, preprint (2003) 
\bibitem{KM92} S.~ Katz, D. R. Morrison, \emph{Gorenstein threefold singularities with small resolutions via invariant theory for Weyl groups}, J. ~Algebraic Geom.~ 1 (1992), 449--530.
\bibitem{Mur13} D.~Murfet, \emph{Residues and duality for singularity categories of isolated Gorenstein singularities}, Compositio Mathematica, Vol.~ 149 ~ Issue 12, Dec.~ 2013, pp 2071--2100
\bibitem{Orlov02} D.~Orlov, \emph{Triangulated categories of singularities and D-branes in Landau-Ginzburg models}, preprint, arXiv:math/0302304v2.
\bibitem{PV10}A.~Polishchuk, A.~Vaintrob,\emph{ Chern characters and Hirzebruch- Riemann-Roch formula for matrix factorizations}, Duke Math.~ J.~ , 161:1863--1926, 2012.
\bibitem{Todpara}
Y.~Toda, 
\emph{Multiple cover formula of generalized {DT} invariants {I}:
  parabolic stable pairs}, Adv.~Math.~ \textbf{257} (2014), 476--526.
\bibitem{TodS}
Y.~Toda, \emph{Flops and {S}-duality conjecture}, 
to appear in Duke Math.~J.~, arXiv:1311.7476.
\bibitem{To14} Y.~Toda, \emph{Non-commutative width and Gopakuma-Vafa invariants}, arxiv:1411.1505v1.
\bibitem{VdB04} M.~Van den Bergh, \emph{Three-dimensional flops and noncommutative rings}, Duke~Math.~ J.~ 122 (2004), no.~3, 423--455.
\bibitem{Wey} G.~Brown, M.~Wemyss,  \emph{In preparation}.
\end{thebibliography}
\end{document}